\newcommand{\al}{\alpha}
\newcommand{\be}{\beta}
\newcommand{\p}{\partial}
\newcommand{\R}{\mathbb{R}}
\newcommand{\na}{\nabla}
\newcommand{\D}{\slashed{D}}
\newcommand{\pd}{\slashed{\partial}}
\newcommand{\A}{\mathbb{A}}
\newcommand{\Lie}{\mathcal{L}}% Lie derivative
\newcommand{\dv}{\dd{vol}}
\newcommand{\MS}[1]{M^{#1}}
\newcommand{\dd}{\mathop{}\!\mathrm{d}}
\DeclareMathOperator{\diverg}{div}
\DeclareMathOperator{\tr}{Trace}
\DeclareMathOperator{\Spin}{Spin}
\DeclareMathOperator{\SO}{SO}
\DeclareMathOperator{\End}{End}
\DeclareMathOperator{\Ker}{Ker}
\DeclareMathOperator{\dist}{dist}
\newtheorem{thm}{Theorem}[section]
\newtheorem{Def}[thm]{Definition}
\newtheorem{lemma}[thm]{Lemma}
\newtheorem{prop}[thm]{Proposition}
\newtheorem{rmk}[thm]{Remark}
\title[A Nonlinear Sigma Model with Gravitino and Partial Regularity]{Partial Regularity for a Nonlinear Sigma Model with Gravitino in Higher Dimensions}
\begin{document}

\author{J\"{u}rgen Jost, Ruijun Wu and Miaomiao Zhu}

\address{Max Planck Institute for Mathematics in the Sciences\\Inselstr. 22--26\\D-04103 Leipzig, Germany}
	\email{jjost@mis.mpg.de}

\address{Max Planck Institute for Mathematics in the Sciences\\Inselstr. 22--26\\D-04103 Leipzig, Germany}
	\email{Ruijun.Wu@mis.mpg.de}

\address{School of Mathematical Sciences, Shanghai Jiao Tong University\\Dongchuan Road 800\\200240 Shanghai, P.R.China}
	\email{mizhu@sjtu.edu.cn}
	
\thanks{The third author was supported in part by National Science Foundation of China (No. 11601325).}

\date{\today}

\begin{abstract}
 We study the regularity problem of the nonlinear sigma model with gravitino fields in higher dimensions. After setting up the geometric model, we derive the Euler--Lagrange equations and consider the regularity of weak solutions defined in suitable Sobolev spaces. We show that any weak solution is actually smooth under some smallness assumption for certain Morrey norms. By assuming some higher integrability of the vector spinor,  we can show a partial regularity result for stationary solutions, provided the gravitino is critical, which means that the corresponding supercurrent vanishes. Moreover, in dimension less than ~6, partial regularity holds for stationary solutions with respect to general gravitino fields.
\end{abstract}

\keywords{nonlinear sigma model, gravitino, stationary solutions, partial regularity}

\subjclass[2010]{53C43, 58E20}

\maketitle

\section{introduction}

Motivated by  super gravity and super string theory in quantum field theory, the nonlinear supersymmetric sigma model has been widely studied in mathematics \cite{deser1976complete, brink1976locally, deligne1999quantum, chen2006dirac, jost2009geometry, jost2014super}. To study the analytical properties of the supersymmetric model, an analogous model was introduced in \cite{jktwz2016regularity} which contains not only the super partner of the scalar field, in mathematical terms a map between Riemannian manifolds,  but also the super partner of the other field of the theory, a Riemannian metric; the latter is a spinorial field called the gravitino. In~\cite{jwz2017coarse, jwz2017energy, jktwz2017symmetries}, further geometrical and analytical properties of the model were explored. These studies also clarified  the role of the gravitino field in the geometry and analysis of this model.
On one side, the gravitino field brings additional  symmetries into the model and hence leads to new conservation laws, making the geometric aspect more interesting; on the other hand, this field is not the solution of  a differential equation since it is only algebraically involved in the action functional, making the analysis of the critical points of the functional subtle.
In physics the gravitino field is known as Rarita-Schwinger field, which appears also in higher dimensional space-time.
In this article,  we shall study the higher dimensional analogue of the model in \cite{jktwz2016regularity}.

In the higher dimensional case, we take a similar action functional to that of \cite{jost2014super,
chen2005regularity}; actually they are of the same form. While this higher dimensional model does not possess a direct physical interpretation, it possesses interesting  analytical properties, which we explore in this paper.

Here we give the precise description of our model. The background material from  spin geometry can be readily found in the literature, for instance in \cite{lawson1989spin, jost2008riemannian}.
Let $(M,g)$ be an $m$-dimensional closed oriented Riemannian manifold, where~$m\ge 3$.
We assume that $(M,g)$  is a spin manifold, with a fixed spin structure given by a 2-fold covering $\xi\colon P_{\Spin}(M,g)\to P_{\SO}(M,g)$ of principal fiber bundles.
Let $S$ be an associated spinor bundle, which is a real vector bundle of rank $2^{2[\frac{m}{2}]}$.
On this spinor bundle $S$ there exist a spinor metric $g_s$ (which is a fiberwise real inner product) and an induced spinor connection $\na^s$ which is compatible with the spinor metric.
As usual we denote the Clifford map by $\gamma\colon TM \to \End(S)$ or sometimes for simplicity by a dot $`` \cdot"$. The Clifford relation for~$(S,M,g)$ reads
\begin{equation}
 \gamma(X)\gamma(Y)+\gamma(Y)\gamma(X)=-2g(X,Y), \qquad \forall X,Y\in\Gamma(TM).
\end{equation}
The spin Dirac operator on $S$ is given as follows. Let $\{e_\al\}$ be an oriented local frame and~$s \in\Gamma(S)$ a spinor field. Then
\begin{equation}
 \pd s\coloneqq \gamma(e_\al)\na^s_{e_\al}s=e_\al\cdot  \na^s_{e_\al}s.
\end{equation}
It is a first-order elliptic operator, which is essentially the Cauchy-Riemann operator in the two-dimensional case, see \cite{lawson1989spin}. It appears also in other models, see for instance \cite{chen2006dirac, jktwz2016regularity}.

A gravitino field is defined as a section of the tensor product bundle $S\otimes TM$. It serves as the supersymmetric partner of the Riemannian metric in physics. Note that the Clifford map induces a surjective map, still denoted by $\gamma$, in the following way:
\begin{equation}
 \begin{split}
  \gamma\colon S\otimes TM &\to S \\
               s\otimes v&\mapsto v\cdot s.
 \end{split}
\end{equation}
The canonical right inverse is given by $\sigma\colon S\to S\otimes TM$ where
\begin{equation}
 \sigma(s)\coloneqq -\frac{1}{m}\delta^{\al\be} e_{\al}\cdot s\otimes e_\be
\end{equation}
with respect to the local $g$-orthonormal frame $\{e_\al\}$. They together give rise to a splitting of the bundle $S\otimes TM$ into $\Ker\gamma\oplus S$,  where the projection maps onto the isomorphic image of ~$S$ and onto $\Ker\gamma$ respectively are given by
\begin{align}
 P=\sigma\circ \gamma, \quad resp. \quad  Q=\mathds{1}-P.
\end{align}
Locally writing $\chi\in\Gamma(S\otimes TM)$ as $\chi=\chi^\al\otimes e_\al$,  we have
\begin{align}
 P\chi=-\frac{1}{m}e_\be\cdot e_\al\cdot \chi^\al\otimes e_\be, \quad Q\chi=-\frac{1}{m}e_\al\cdot e_\be\cdot \chi^\al\otimes e_\be.
\end{align}
We remark that  only the $Q$-part of the gravitino will enter the action functional of our model; for this reason, in the literature sometimes only the sections of the subbundle $Q(S\otimes TM)$ are referred to as gravitinos; however, we will follow the convention to call all sections of~$S\otimes TM$  gravitino fields.

The main nonlinearity of the model comes from the coupling of the spinor with a map between  Riemannian manifolds, which we shall now explain.
Let $(N,h)$ be a Riemannian manifold with the Levi-Civita connection $\na^{N}$.
Consider a smooth map $\phi\colon M\to N$ with the tangent map $T\phi\colon TM\to TN$.
On the pullback there are the induced Riemannian metric~$\phi^*h$ and the induced connection~$\na^{\phi^*TN}$.
Then we can equip the tensor product bundle~$S\otimes \phi^*TN$ with the induced metric $\langle\cdot, \cdot\rangle_{S\otimes\phi^*TN}$ and the connection $\widetilde{\na}\equiv \na^{S\otimes\phi^*TN}$, and define a Dirac operator~$\D$ on~$\Gamma(S\otimes \phi^*TN)$ as follows.
Let~$\{y^i\}$ be local coordinates of~$N$, then~$\{\phi^*(\p_{y^i})\}$ forms a local frame of~$\phi^*TN$. Then~$\psi\in\Gamma(S\otimes\phi^*TN)$ can be locally expressed as~$\psi=\psi^j\otimes \phi^*(\p_{y^i})$. Define
\begin{equation}
 \begin{split}
  \D\psi &\coloneqq e_\al\cdot \widetilde{\na}_{e_\al}\psi  \\
  &= e_\al\cdot \na^s_{e_\al}\psi^j\otimes \phi^*\left(\frac{\p}{\p y^j}\right)
      +e_\al\cdot\psi^j\otimes \na^{\phi^*TN}_{e_\al}\phi^*\left(\frac{\p}{\p y^j}\right) \\
  &= \pd\psi^j\otimes\phi^*\left(\frac{\p}{\p y^j}\right)
     +e_\al\cdot\psi^j\otimes\phi^*\left(\na^{TN}_{T\phi(e_\al)}\frac{\p}{\p y^j}\right).
 \end{split}
\end{equation}
This twisted spin Dirac operator $\D$ is a first-order elliptic differential operator, which is essentially self-adjoint on the Hilbert space $L^2(S\otimes\phi^*TN)$.

The action functional has the same form as in \cite{jktwz2016regularity}:
\begin{equation}
\label{eq:AF}
	\begin{split}
		\A(\phi, \psi;g, \chi)&\coloneqq \int_M |\dd \phi|_{g^\vee\otimes \phi^*h}^2
			+ \langle \psi, \D \psi \rangle_{g_s\otimes \phi^*h} \\
			&\qquad -4\langle (\mathds{1}\otimes\phi_*)(Q\chi), \psi \rangle_{g_s\otimes\phi^*h}
			-|Q\chi|^2_{g_s\otimes g} |\psi|^2_{g_s\otimes \phi^*h}
			-\frac{1}{6} R(\psi) \dd vol_g,
	\end{split}
\end{equation}
where the last curvature term is locally defined by
\begin{equation}
 -\frac{1}{6}R(\psi)
=-\frac{1}{6}R^{N}_{ijkl}\langle \psi^i, \psi^k \rangle_{g_s} \langle \psi^j,\psi^l \rangle_{g_s}.
\end{equation}
As before we write
\begin{equation}
 SR(\psi)\coloneqq \left\langle \psi^l,\psi^j\right\rangle_{g_s}\psi^k\otimes \phi^*\left(R^N\left(\frac{\p}{\p y^k},\frac{\p}{\p y^l}\right)\frac{\p}{\p y^j}\right),
\end{equation}
then $R(\psi)=\langle SR(\psi),\psi\rangle_{g_s\otimes g}$. For later  purposes,  we also introduce the notation
\begin{equation}
 S\na R(\psi)=\phi^*(\na^N R^N)_{ijkl}\langle\psi^i,\psi^k\rangle_{g_s}\langle\psi^j,\psi^l\rangle_{g_s}.
\end{equation}

This action functional is closely related to the functionals for Dirac-harmonic maps and for Dirac-harmonic maps with curvature term. In fact, if the gravitino vanishes in the model, the action $\A$ then reads
\begin{equation}
 L_c(\phi,\psi)=\int_M |\dd\phi|^2+\langle\psi,\D\psi\rangle-\frac{1}{6} R(\psi)\dd vol_g,
\end{equation}
whose critical points are known as Dirac-harmonic maps with curvature term, introduced in \cite{chen2008liouville} and further studied in \cite{branding2015some,jost2015geometric}. And if the curvature term is also dropped, then we get the Dirac-harmonic map functional  introduced in \cite{chen2006dirac}. From the perspective of quantum field theory, they are simplified versions of the model considered here, and describe the behavior of the nonlinear sigma models in  degenerate cases.

\begin{prop}
 The Euler-Lagrange equations for the action functional \eqref{eq:AF} are given by
 \begin{equation} \label{eq:EL}
 \begin{split}
 \tau(\phi)=&\frac{1}{2}R^{N}(\psi, e_\al\cdot\psi)\phi_* e_\al-\frac{1}{12}S\na R(\psi)   \\
 	&  -\frac{2}{m}\left(\langle \na^s_{e_\be}(e_\al \cdot e_\be \cdot \chi^\al), \psi \rangle_S
          + \langle e_\al \cdot e_\be \cdot \chi^\al, \widetilde{\na}_{e_\be} \psi \rangle_S\right),  \\
 \D\psi =& |Q\chi|^2\psi +\frac{1}{3}SR(\psi)+2(\mathds{1}\otimes \phi_*)Q\chi.
 \end{split}
\end{equation}
\end{prop}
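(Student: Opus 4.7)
The plan is to vary the action $\A$ separately with respect to $\psi$ and $\phi$, exploiting self-adjointness of $\D$, the algebraic symmetries of $R^N$ and of the spinor inner product, and for the $\phi$-variation a parallel transport of $\psi$ along the variation curves in $\phi^*TN$ to neutralize the dependence of the bundle $S\otimes\phi^*TN$ on $\phi$.

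For the spinor equation, I fix $\phi$ and set $\psi_t=\psi+t\eta$ with $\eta\in\Gamma(S\otimes\phi^*TN)$. The $|\dd\phi|^2$ term drops out; the Dirac term contributes $2\int\langle\D\psi,\eta\rangle$ by essential self-adjointness of $\D$; the two gravitino terms give $-4\int\langle(\mathds{1}\otimes\phi_*)Q\chi,\eta\rangle$ and $-2\int|Q\chi|^2\langle\psi,\eta\rangle$. For the curvature term, expanding
\[
\delta\bigl(R^N_{ijkl}\langle\psi^i,\psi^k\rangle\langle\psi^j,\psi^l\rangle\bigr)
\]
and applying the antisymmetries $R^N_{ijkl}=-R^N_{jikl}=-R^N_{ijlk}$ together with the pair exchange $R^N_{ijkl}=R^N_{klij}$ and the symmetry of $\langle\cdot,\cdot\rangle_{g_s}$, the four resulting terms collapse to $4\int\langle SR(\psi),\eta\rangle$. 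Setting the total variation to zero for arbitrary $\eta$ and dividing by $2$ gives the second equation of \eqref{eq:EL}.

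For the map equation, I take a smooth family $\phi_t$ with $\phi_0=\phi$ and $V=\p_t\phi_t|_{t=0}\in\Gamma(\phi^*TN)$, and lift $\psi$ to $\psi_t\in\Gamma(S\otimes\phi_t^*TN)$ by parallel transport along the curves $t\mapsto\phi_t(x)$. With this lift the coordinate components $\psi^j$ have vanishing naive $t$-derivative, so all $t$-derivatives fall on the $\phi$-dependent objects $h_{ij}$, $\Gamma^k_{ij}$, $R^N_{ijkl}$, and $\phi_*e_\be=\p_\be\phi^i\,\phi^*\p_{y^i}$. The Dirichlet term yields the usual contribution to the tension field. The Dirac term's only nontrivial variation comes from the commutator $[\p_t,\widetilde\na_{e_\al}]=R^N(V,\phi_*e_\al)$, which after integration by parts and use of the $R^N$-symmetries produces $\tfrac{1}{2}R^N(\psi,e_\al\cdot\psi)\phi_*e_\al$. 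The mass-like term $|Q\chi|^2|\psi|^2$ is preserved by parallel transport and contributes nothing. The curvature-potential term produces a $\phi^*(\na^N R^N)$-contraction which, via the definition of $S\na R(\psi)$, rearranges to $-\tfrac{1}{12}S\na R(\psi)$.

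The main obstacle is the gravitino coupling $-4\int\langle(\mathds{1}\otimes\phi_*)Q\chi,\psi\rangle$. Writing $Q\chi=-\tfrac{1}{m}e_\al\cdot e_\be\cdot\chi^\al\otimes e_\be$ and $\phi_*e_\be=\p_\be\phi^i\,\phi^*\p_{y^i}$, the only place $V$ enters after the parallel-transport lift is through the replacement $\p_\be\phi^i\mapsto\p_\be\phi^i+t\p_\be V^i$. Integrating by parts on~$M$ to move $\p_\be$ off of $V$ and onto the spinorial factors, and using metric compatibility of $\na^s$ and $\widetilde\na$ with Clifford multiplication, one obtains precisely
\[
-\tfrac{2}{m}\bigl(\langle\na^s_{e_\be}(e_\al\cdot e_\be\cdot\chi^\al),\psi\rangle_S+\langle e_\al\cdot e_\be\cdot\chi^\al,\widetilde\na_{e_\be}\psi\rangle_S\bigr).
\]
Collecting all coefficients of $V$ and equating them to zero yields the first equation of~\eqref{eq:EL}. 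The most delicate bookkeeping is keeping the Clifford products, the $Q$-projection, and the moving covariant derivative on $\phi^*TN$ mutually consistent throughout this integration by parts.
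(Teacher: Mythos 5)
Your approach is the standard variational derivation, which is exactly what the paper relies on: the authors omit the proof and refer to the analogous two-dimensional computation in \cite{jktwz2016regularity}, noting that it carries over to general dimension with only minor changes. Your $\psi$-variation (self-adjointness of $\D$, collapse of the four curvature-variation terms via the $R^N$ symmetries and symmetry of $g_s$, division by two) and your $\phi$-variation (commutator of $\widetilde\na$ with the variation field giving the $R^N(\psi,e_\al\cdot\psi)\phi_*e_\al$ term, the $\na^N R^N$ contribution, and the divergence-structured gravitino term from integrating $\widetilde\na_{e_\be}V$ by parts) are the expected steps and produce the stated coefficients.

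One imprecision worth correcting: under parallel transport of $\psi$ along $t\mapsto\phi_t(x)$, it is the \emph{covariant} $t$-derivative that vanishes, $\widetilde\na_t\psi_t=0$; the naive coordinate derivative is $\p_t\psi^j=-\Gamma^j_{ik}(\phi)V^i\psi^k\neq 0$ in general. What your argument actually uses is that, working covariantly, the $t$-derivative passes through the pairings and Clifford multiplications and acts only on the $\phi$-dependent geometric data ($\phi_*e_\be\mapsto\widetilde\na_{e_\be}V$, $R^N\mapsto\na^N_V R^N$, etc.), with $\psi$ covariantly constant. In particular, the variation of the gravitino term should be tracked as the covariant identity $\widetilde\na_t\bigl((\phi_t)_*e_\be\bigr)=\widetilde\na_{e_\be}V$ rather than as a naive replacement $\p_\be\phi^i\mapsto\p_\be\phi^i+t\,\p_\be V^i$ of coordinate components; the two agree only after the Christoffel terms from $\p_t\psi^j$ and from $\p_t h_{ij}$ are accounted for. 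This does not affect your conclusions, but the phrasing ``the coordinate components $\psi^j$ have vanishing naive $t$-derivative'' is incorrect as stated.
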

The derivation of the equations is quite similar to that in \cite[Section 4]{jktwz2016regularity}, with only minor differences, and thus we omit the proof here.
We can then define weak solutions of the system~\eqref{eq:EL} as the critical points of the action functional~\eqref{eq:AF} on the  Sobolev space
$$W^{1,2}(M,N)\times S^{1,\frac{4}{3}}(\Gamma(S\otimes \phi^*TN)).$$
Recall that, by taking an isometric embedding $N\hookrightarrow \R^K$, the space $W^{1,2}(M,N)$ is defined by
\begin{equation}
 W^{1,2}(M,N)\coloneqq \left\{\phi\in W^{1,2}(M,\R^K) \   \middle|  \  \phi(x)\in N \quad \text{a.e.}  \ x \right\}
\end{equation}
and the space~$ S^{1,\frac{4}{3}}(\Gamma(S\otimes \phi^*TN))$ is defined as the set of weakly differentiable sections~$\psi=(\psi^1,\cdots,\psi^K)\in W^{1,1}(\Gamma(S))^K$ that satisfy
\begin{equation}
 \sum_{i=1}^K \nu_i\psi^i(x)=0, \textnormal{ a.e. } x\in M,\qquad \forall \;\nu=(\nu_1,\cdots, \nu_K)\in (T_{\phi(x)}N)^{\perp}\subset T_{\phi(x)}\R^K,
\end{equation}
and also
\begin{align}
 \psi^i\in L^4(M), \quad\dd\psi^i\in L^{\frac{4}{3}}(M),
\end{align}
for all $1\le i\le K$, see e.g. \cite[Definition 1.1]{wang2009regularity}. Note that we are considering smooth gravitinos throughout this article, thus a pair in this space is sufficient to ensure that the  action is well-defined and finite. Here,  we follow the notation convention on Morrey spaces in \cite{jktwz2016regularity}.

Such a weak solution will be shown to be actually smooth provided a suitable Morrey norm  is small. More precisely, we have the following $\varepsilon$-regularity result.
\begin{thm}\label{thm:full regularity with small Morrey norms}
 For $m\ge 2$, there exists $\varepsilon_0>0$ depending on $(M,g)$, $(N,h)$ and the gravitino~$\chi$ such that if~$(\phi,\psi)\in W^{1,2}(M,N)\times S^{1,\frac{4}{3}}(\Gamma(S\otimes\phi^*TN))$ is a weak solution of \eqref{eq:EL} satisfying
 \begin{equation}\label{eq:small Morrey norms}
  \|d\phi\|_{M^{2,2}(U)}+\|\psi\|_{M^{4,2}(U)}\le \varepsilon_0
 \end{equation}
for some open subset $U\subset M$, then $(\phi,\psi)$ is smooth in $U$.
\end{thm}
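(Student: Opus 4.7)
The plan is to adapt the Rivi\`ere--Struwe partial regularity method for geometric elliptic systems in higher dimensions to the coupled system \eqref{eq:EL}. The core of the argument is a joint Morrey-decay estimate for $(d\phi,\psi)$ on small balls, from which H\"older regularity follows by Campanato's characterization, and then smoothness by a standard elliptic bootstrap.

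First I would embed $(N,h)\hookrightarrow\R^K$ isometrically and view $\phi=(\phi^1,\ldots,\phi^K)$, $\psi=(\psi^1,\ldots,\psi^K)$ as constrained Euclidean-valued fields. The map equation in \eqref{eq:EL} then takes the form
\begin{equation*}
 -\Delta_g\phi = A(\phi)(d\phi,d\phi) + F(\phi,\psi,\chi,\na\chi,\widetilde{\na}\psi),
\end{equation*}
where $A$ is the second fundamental form of $N$ in $\R^K$ and $F$ collects the curvature contractions $\tfrac12 R^N(\psi,e_\al\cdot\psi)\phi_* e_\al - \tfrac{1}{12}S\na R(\psi)$ together with the two gravitino-coupling terms from \eqref{eq:EL}. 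Following Rivi\`ere's observation, the quadratic part decomposes as $A(\phi)(d\phi,d\phi)=\Omega\cdot d\phi$ with $\Omega\in L^2(U,\mathfrak{so}(K)\otimes T^*M)$ antisymmetric and $|\Omega|\lesssim|d\phi|$. The Dirac equation $\D\psi = |Q\chi|^2\psi + \tfrac{1}{3}SR(\psi) + 2(\mathds{1}\otimes\phi_*)Q\chi$ is used as written.

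Next, on a small ball $B_r\subset U$, the smallness \eqref{eq:small Morrey norms} allows one to invoke the Rivi\`ere--Struwe gauge-change lemma: there exists $P\in W^{1,2}(B_r,SO(K))$ with $\|dP\|_{M^{2,2}(B_r)}\lesssim\|\Omega\|_{M^{2,2}(B_r)}$ such that $P^{-1}dP+P^{-1}\Omega P$ is coclosed. The rotated map equation becomes a Hodge system whose forcing is linear in $(\psi,\chi,\na\chi,\widetilde{\na}\psi)$; combined with Hodge decomposition and the Adams--Riesz embedding for Morrey spaces, this yields, for some fixed $\theta\in(0,1)$ and $\alpha\in(0,1)$,
\begin{equation*}
 \|d\phi\|_{M^{2,2}(B_{\theta r})}^2 \le \tfrac{1}{2}\|d\phi\|_{M^{2,2}(B_r)}^2 + C\bigl(\|\psi\|_{M^{4,2}(B_r)}^4 + r^{2\alpha}\bigr).
\end{equation*}
Simultaneously, $L^p$-theory for $\D$ in Morrey spaces applied to the spinor equation, together with the cubic bound $\|SR(\psi)\|_{M^{4/3,2}}\lesssim\|\psi\|_{M^{4,2}}^3$, gives
\begin{equation*}
 \|\psi\|_{M^{4,2}(B_{\theta r})}^4 \le \tfrac{1}{2}\|\psi\|_{M^{4,2}(B_r)}^4 + C\bigl(\|d\phi\|_{M^{2,2}(B_r)}^2 + r^{2\alpha}\bigr).
\end{equation*}
Adding and iterating these two inequalities yields $\|d\phi\|_{M^{2,2}(B_r)}^2+\|\psi\|_{M^{4,2}(B_r)}^4\lesssim r^{2\alpha}$ on a slightly smaller open set, so Campanato's theorem gives $\phi\in C^{0,\alpha}_{\mathrm{loc}}(U)$ and $\psi\in L^q_{\mathrm{loc}}(U)$ for some $q>m$. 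Once $\phi$ is H\"older and $\psi\in L^q$ with $q>m$, classical $L^p$-theory for $-\Delta_g$ and $\D$ applied to the system upgrades $(\phi,\psi)$ to $(W^{2,p},W^{1,p})$ for every finite $p$, and Schauder iteration then produces $C^\infty(U)$.

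The hard part is the decay estimate for $\phi$, because $F$ contains the derivative term $\langle e_\al\cdot e_\be\cdot\chi^\al,\widetilde{\na}_{e_\be}\psi\rangle$. At the critical Morrey scaling, $\widetilde{\na}\psi$ is not pointwise dominated by $|d\phi||\psi|$, so this term cannot be absorbed into the smallness of $\|d\phi\|_{M^{2,2}}$ in the naive way. The remedy is to use the Dirac equation to trade $\widetilde{\na}\psi$ for lower-order quantities in $(\psi,d\phi,\chi)$, exploiting the first-order ellipticity of $\D$, or at the weak-formulation level to integrate by parts and move the derivative onto the smooth field $\chi$ and the test function. A related subtlety is that the $\chi$-dependent forcing does not respect the antisymmetric structure that drives the Coulomb-gauge cancellation; hence these contributions must be estimated separately as subcritical perturbations, using smoothness of $\chi$ rather than any conservation-law cancellation.
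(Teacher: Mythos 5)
Your proposal is in the right spirit but takes a genuinely different route from the paper, and one of your two suggested remedies for the derivative term would not work as stated.

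\textbf{What the paper actually does.} After embedding $N\hookrightarrow\R^K$ and writing the local system, the paper \emph{decouples} the bootstrap. First, it applies a Morrey-space regularity lemma for first-order Dirac-type systems with small $M^{2,2}$ coefficients (Lemma~\ref{lemma:integrability of spinors}, from \cite{jktwz2016regularity}) to the spinor equation alone. This yields $\psi'\in L^p_{loc}$ for \emph{every} $p<\infty$ in one shot, with no iteration and no reference to any decay of $d\phi$. It then feeds $\psi'\in W^{1,2-o}_{loc}$, hence $\diverg V'^i\in L^{2-o}_{loc}$, into the map equation $\Delta\phi'^i=\Omega^i_j\p_\al\phi'^j+Z^i(\psi')-\diverg V'^i$ with $\Omega$ antisymmetric, and applies Sharp's Morrey regularity theorem when $m=3$ (where $2-o>m/2$) and Moser's $L^p$-theory for harmonic-map-type systems when $m\ge4$ (where any $p\in(1,\infty)$ is allowed). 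A standard bootstrap then gives smoothness. Your joint decay iteration is plausible, but it intertwines the two equations, so the constants in each step depend on the current state of the other field, making the iteration more delicate to close; the paper's modular use of Lemma~\ref{lemma:integrability of spinors} avoids this entirely and is why the statement is clean even for $m$ large.

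\textbf{The concrete issue with your first remedy.} You propose to ``use the Dirac equation to trade $\widetilde{\na}\psi$ for lower-order quantities''. This does not work pointwise: $\D\psi=\gamma(e_\al)\widetilde{\na}_{e_\al}\psi$ is only a single Clifford-contracted trace of $\widetilde{\na}\psi$, so knowing $\D\psi$ does not recover the full covariant derivative $\widetilde{\na}_{e_\be}\psi$ appearing in $\langle e_\al\cdot e_\be\cdot\chi^\al,\widetilde{\na}_{e_\be}\psi\rangle$. Your second remedy (integration by parts, keeping the derivative off $\psi$) is the correct one and is exactly how the paper proceeds: it rewrites this contribution as $-\diverg V'^i-\Gamma^i_{jk}\langle V'^k,\na\phi'^j\rangle$, where $V'^i$ is built algebraically from $\chi$ and $\psi'$ (equation~\eqref{eq:def of V}) with no derivatives of $\psi'$, so the divergence structure is preserved at the level of the weak formulation and the improved integrability of $\psi'$ directly controls $\diverg V'^i$.
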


In particular, if $U=M$, this says that a weak solutions is smooth provided a certain Morrey norms are small. In dimension two, the condition \eqref{eq:small Morrey norms} can always be satisfied locally by the absolute continuity of the integral, hence weak solutions are always smooth, as proved in~\cite{jktwz2016regularity}.

However, in higher dimensions, the condition \eqref{eq:small Morrey norms} is not always satisfied. From the study of the partial regularity of harmonic maps \cite{Giaquinta82,Giaquinta84,Schoen-Uhlenbeck, evens1991partial, bethuel1993singular}, Dirac-harmonic maps \cite{wang2009regularity} and Dirac-harmonic maps with curvature terms \cite{JLZ} in higher dimensions, we are naturally led to consider the \emph{stationary solutions} of this model. A stationary solution is a weak solution of \eqref{eq:EL} which is also critical with respect to domain variations. Actually this concept supposes the \emph{weak} validity of the conservation law corresponding to diffeomorphism invariance, compare ~\cite{jktwz2017symmetries} and see Definition \ref{def:stationary solutions} for an explicit formulation.

For a stationary solution, we can obtain an ``almost'' monotonicity formula for the map $\phi$ in the sense that
\begin{equation}
 \frac{1}{R_2^{m-2}}\int_{B_{R_2}}|\dd\phi|^2\dv-\frac{1}{R_1^{m-2}}\int_{B_{R_1}} |\dd\phi|^2\dv
 =\int_{R_1}^{R_2} F(r)\dd r
\end{equation}
for some function $F(r)$ which involves all the four fields and is not definite, see Proposition~\ref{monotone}. This prevents us from obtaining a strict monotonicity inequality. However, as pointed out in~\cite{wang2009regularity}, if we assume that $\widetilde{\na}\psi\in L^p(M)$ for some $p>\frac{2}{3}$, then we can  control the error term and get a type of monotonicity inequality of the following form (see Proposition~\ref{prop:almost monotone inequality})
\begin{equation}
 \frac{1}{R_1^{m-2}}\int_{B_{R_1}}|\dd\phi|^2\dd x
  \le \frac{1}{R_2^{m-2}}\int_{B_{R_2}}|\dd\phi|^2\dd x
      +C_0 R_2^{3-\frac{2m}{p}}.
\end{equation}
for some $C_0=C_0(p, \|\widetilde{\na}\psi\|_{L^p},\|\chi\|_{L^\infty})>0$, provided in addition that the gravitino is \emph{critical} with respect to variations, which means that the corresponding supercurrent $J$ (see equation~\eqref{eq:supercurrent}) vanishes, see Section \ref{sect:stationary solutions} and also see \cite{jktwz2017symmetries} for the two dimensional case. Making use of the standard argument, we obtain the partial regularity for stationary solutions, with a singular set of Hausdorff codimension at least two.
\begin{thm}\label{thm:partial regularity for stationary solutions-critical gravitino}
 For $m\ge 3$, let $(\phi,\psi)\in W^{1,2}(M,N)\times S^{1,\frac{4}{3}}(\Gamma(S\otimes\phi^*TN))$ be a stationary solution of the Euler--Lagrange equations \eqref{eq:EL}. If in addition, the gravitino field is critical and ~$\widetilde{\na}\psi\in L^p(M)$ for some $p>\frac{2m}{3}$, then there exists a closed set $\mathcal{S}(\phi)\subset M$ with~$\mathcal{H}^{m-2}(\mathcal{S}(\phi))=0$, such that $(\phi,\psi)\in C^\infty(M\backslash \mathcal{S}(\phi))$.
\end{thm}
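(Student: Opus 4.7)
The plan follows the standard blueprint for partial regularity of stationary solutions, originating in the harmonic map work of Schoen--Uhlenbeck, Evans, and Bethuel, and adapted by Wang to Dirac-harmonic maps. The three ingredients are the $\varepsilon$-regularity Theorem~\ref{thm:full regularity with small Morrey norms}, the almost-monotonicity inequality stated as Proposition~\ref{prop:almost monotone inequality} (whose availability is exactly what the critical-gravitino and $L^p$ hypotheses are for), and a Vitali covering argument on the exceptional set.

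First I would define, for a constant $\varepsilon_1 \in (0,\varepsilon_0/2)$ to be fixed below,
$$\mathcal{S}(\phi) := \Bigl\{\, x \in M : \liminf_{r \to 0^+} r^{2-m} \int_{B_r(x)} |d\phi|^2 \,\dv \ge \varepsilon_1^2 \,\Bigr\},$$
which is closed in $M$ since its complement is open by continuity of $x \mapsto r^{2-m} \int_{B_r(x)} |d\phi|^2 \,\dv$ for each fixed $r$. For $x_0 \notin \mathcal{S}(\phi)$, pick $R_0$ small with $R_0^{2-m} \int_{B_{R_0}(x_0)} |d\phi|^2 \,\dv < \varepsilon_1^2$. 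Proposition~\ref{prop:almost monotone inequality} then yields
$$r^{2-m} \int_{B_r(y)} |d\phi|^2 \,\dv \le 2\varepsilon_1^2 + C_0 R_0^{\,3 - 2m/p}$$
for all $y$ in a neighbourhood of $x_0$ and $r < R_0/2$; since $p > 2m/3$ makes the exponent $3 - 2m/p$ strictly positive, choosing $\varepsilon_1$ and $R_0$ small enough in terms of $\varepsilon_0$ and $C_0$ forces the right-hand side below $\varepsilon_0^2/2$. This gives $\|d\phi\|_{M^{2,2}(B_{R_0/4}(x_0))} \le \varepsilon_0/\sqrt{2}$. For the spinor term, writing $p^\ast = mp/(m-p)$ when $p<m$ (the case $p \ge m$ being even easier), Sobolev embedding yields $\psi \in L^{p^\ast}(M)$ with $p^\ast > 4$, and Hölder's inequality gives
$$r^{2-m}\int_{B_r(x)} |\psi|^4 \,\dv \le C\, r^{\,6-4m/p}\, \|\psi\|_{L^{p^\ast}(B_r(x))}^4,$$
whose exponent $6 - 4m/p$ is strictly positive precisely because $p > 2m/3$. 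Hence $\|\psi\|_{M^{4,2}(B_r(x))} \to 0$ uniformly in $x$ as $r \to 0$, so the hypotheses of Theorem~\ref{thm:full regularity with small Morrey norms} are met on a neighbourhood of every $x_0 \notin \mathcal{S}(\phi)$, producing $(\phi,\psi) \in C^\infty(M\setminus \mathcal{S}(\phi))$.

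For $\mathcal{H}^{m-2}(\mathcal{S}(\phi)) = 0$, I would first note that Lebesgue differentiation of $|d\phi|^2 \,\dv$ forces $\mathcal{S}(\phi)$ to have Lebesgue measure zero. By absolute continuity of $|d\phi|^2 \,\dv$, for each $\eta > 0$ there exists an open $U \supset \mathcal{S}(\phi)$ with $\int_U |d\phi|^2 \,\dv < \eta$. For every $x \in \mathcal{S}(\phi)$ there are arbitrarily small radii $r$ with $B_r(x) \subset U$ and $\int_{B_r(x)} |d\phi|^2 \,\dv \ge (\varepsilon_1^2/2)\, r^{m-2}$. A Vitali $5r$-covering then produces disjoint balls $B_{r_i}(x_i) \subset U$ with $\mathcal{S}(\phi) \subset \bigcup B_{5r_i}(x_i)$, and
$$\mathcal{H}^{m-2}_{10\delta}(\mathcal{S}(\phi)) \le C \sum_i r_i^{m-2} \le C \varepsilon_1^{-2} \sum_i \int_{B_{r_i}(x_i)} |d\phi|^2 \,\dv \le C \varepsilon_1^{-2}\, \eta;$$
letting $\delta \to 0$ and then $\eta \to 0$ yields the claim. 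The main obstacle is really securing Proposition~\ref{prop:almost monotone inequality}, where the critical-gravitino assumption kills the supercurrent contribution and $p > 2m/3$ tames the residual spinor error; once that monotonicity is in hand, everything reduces to the classical measure-theoretic template.
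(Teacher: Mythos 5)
Your proposal is correct and matches the paper's intended argument: the paper's proof simply invokes Proposition~\ref{prop:almost monotone inequality} and then cites the proof of Theorem~1.8 in \cite{wang2009regularity} for the remaining steps, which are exactly the three you carry out (defining $\mathcal{S}(\phi)$ via the scaled-energy liminf, propagating smallness of the Morrey norms of $d\phi$ and $\psi$ via the almost-monotonicity and Sobolev--H\"older so that Theorem~\ref{thm:full regularity with small Morrey norms} applies off $\mathcal{S}(\phi)$, and a Vitali covering with absolute continuity for $\mathcal{H}^{m-2}(\mathcal{S}(\phi))=0$). A minor cosmetic point: the openness of $M\setminus\mathcal{S}(\phi)$ does not follow from continuity in $x$ at fixed $r$ alone but from the almost-monotonicity, which your subsequent paragraph in fact supplies, and the constant in front of $\varepsilon_1^2$ when moving the center from $x_0$ to nearby $y$ is $2^{m-2}$ rather than $2$, neither of which affects the argument.
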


We take the gravitino field to be critical for technical reasons. But if the dimension of $M$ is low, we can still get the partial regularity of the stationary solutions for a general gravitino.

\begin{thm}\label{thm:partial regularity for stationary solutions-low dimension}
 For $3\le m\le 5$, let $(\phi,\psi)\in W^{1,2}(M,N)\times \mathcal{S}^{1,\frac{4}{3}}(\Gamma(S\otimes\phi^*TN))$ be a stationary solution of the Euler--Lagrange equations \eqref{eq:EL}.
 If in addition~$\widetilde{\na}\psi\in L^p(M)$ for some $p>\frac{2m}{3}$, then there exists a closed set $\mathcal{S}(\phi)\subset M$ with~$\mathcal{H}^{m-2}(\mathcal{S}(\phi))=0$, such that $(\phi,\psi)\in C^\infty(M\backslash \mathcal{S}(\phi))$.
\end{thm}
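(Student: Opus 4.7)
The structure of the argument mirrors that of Theorem~\ref{thm:partial regularity for stationary solutions-critical gravitino}: the $\varepsilon$-regularity result of Theorem~\ref{thm:full regularity with small Morrey norms}, coupled with an almost monotonicity formula for the scaled Dirichlet energy, reduces partial regularity to a standard Hausdorff measure estimate of the singular set via a Vitali-type covering argument. The only step that genuinely used the criticality of $\chi$ (vanishing supercurrent $J$) is the derivation of the almost monotonicity (Proposition~\ref{prop:almost monotone inequality}), and so the plan is to redo that derivation for a general smooth gravitino, accepting a dimension restriction as the price.

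Testing the stationarity identity (Definition~\ref{def:stationary solutions}) against a radial vector field cut off outside $B_R$ produces, in addition to the terms controlled in Proposition~\ref{prop:almost monotone inequality}, an extra contribution schematically of the form $\int_{B_R}\langle J, X\lrcorner\na\chi\rangle\,\dv$ with $J$ the supercurrent. Since $J$ is algebraically bilinear in $\psi$ and $\dd\phi$, and $\chi\in C^\infty$, this extra term is controlled via H\"older's inequality using $\|\dd\phi\|_{L^2(B_R)}\le C$ and an $L^q$ bound on $\psi$ obtained from $\widetilde{\na}\psi\in L^p$ via the Sobolev embedding; here the hypothesis $p>2m/3$ yields $\psi\in L^q$ for some $q>4$, and in fact $\psi\in L^\infty$ once $p\ge m$. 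Using $|X|\le R$ and $|B_R|\le CR^m$, the resulting bound has the schematic form $CR^{3-m/2}$ after the scaling $R^{2-m}$ built into the monotone quantity is taken into account, so it is of order $R^\al$ with $\al>0$ \emph{precisely when} $3\le m\le 5$; for $m\ge 6$ the exponent $3-m/2$ becomes non-positive and the error cannot be absorbed. This is the single place where the dimension restriction enters the argument.

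Combining this with the error already isolated in Proposition~\ref{prop:almost monotone inequality} yields
\begin{equation}
 \frac{1}{R_1^{m-2}}\int_{B_{R_1}}|\dd\phi|^2\,\dv
 \le \frac{1}{R_2^{m-2}}\int_{B_{R_2}}|\dd\phi|^2\,\dv + C_0 R_2^{\al},
\end{equation}
for all $0<R_1<R_2$ sufficiently small, with $C_0=C_0(p,\|\widetilde{\na}\psi\|_{L^p},\|\chi\|_{C^1})$. From this point onward the argument proceeds exactly as for Theorem~\ref{thm:partial regularity for stationary solutions-critical gravitino}: letting $\varepsilon_0$ be the threshold of Theorem~\ref{thm:full regularity with small Morrey norms}, one defines the candidate singular set
\begin{equation}
 \mathcal{S}(\phi)=\Big\{x\in M \ : \ \liminf_{R\to 0} R^{2-m}\int_{B_R(x)}|\dd\phi|^2\,\dv \ge \varepsilon_0^2\Big\},
\end{equation}
deduces smoothness of $(\phi,\psi)$ on $M\setminus\mathcal{S}(\phi)$ from the almost monotonicity together with the independent Morrey-type control on $\|\psi\|_{M^{4,2}}$ furnished by $\widetilde{\na}\psi\in L^p$, and finally obtains $\mathcal{H}^{m-2}(\mathcal{S}(\phi))=0$ by a standard Vitali covering argument.

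The hard part is the power-counting in the second paragraph: one must extract a positive exponent $\al$ after the $R^{2-m}$ scaling, balancing the smoothness of $\chi$ against only $L^p$-control of $\widetilde{\na}\psi$. The margin is tight and closes if and only if $m\le 5$, which is precisely what explains why this method does not remove the criticality assumption on $\chi$ in higher dimensions.
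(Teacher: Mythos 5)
Your overall strategy matches the paper's. Criticality was used only to kill the supercurrent term $\mathrm{III}$ in the almost monotonicity; you redo that estimate for a general smooth gravitino, extract a positive power of $R_2$ when $m\le 5$, and then the singular-set/Vitali argument is verbatim the Dirac-harmonic case of Wang--Xu. The paper does exactly this: the second summand in $\mathrm{III}$ (of the form $|\psi|^2\langle\na\chi,\chi\rangle$) causes no trouble, while the first, $\int_{B_r}\langle\na^s_{\p_r}\chi^\al\otimes\phi_*e_\be,\gamma(e_\be)\gamma(e_\al)\psi\rangle\,\dd x$, is bounded by H\"older using $\dd\phi\in L^2$, $\psi\in L^{mp/(m-p)}$ (Sobolev from $\widetilde\na\psi\in L^p$) and $\na\chi\in L^\infty$, giving the $r$-integrand $r^{3-\frac m2-\frac mp}$ and the condition $3-\frac m2-\frac mp>-1$, which for every $p>\frac{2m}{3}$ holds precisely when $m\le 5$.

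Where your account is imprecise is the power count itself. You claim the error is schematically $CR^{3-\frac m2}$; that is what H\"older would give if $\psi\in L^\infty$, but the hypothesis only delivers $\psi\in L^{mp/(m-p)}$, and the correct exponent after integrating over $r$ is $4-\frac m2-\frac mp$, strictly smaller for $p<m$. Your exponent carries no $p$-dependence, so it cannot register the tightening of the margin as $p\downarrow\frac{2m}{3}$, which is exactly what the paper's inequality $3-\frac m2-\frac mp>-1$ records. The final restriction $m\le 5$ nevertheless agrees, but that is a coincidence of thresholds---both $3-\frac m2>0$ and $\frac{2m}{8-m}\le\frac{2m}{3}$ are equivalent to $m\le 5$---not a verification of the count. (A related minor slip: $p=m$ gives $\psi\in\bigcap_{q<\infty}L^q$, not $L^\infty$; only $p>m$ gives boundedness.) These are corrections of detail rather than structural gaps: the mechanism you identified and the conclusion are correct, and the route is the paper's.
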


The question of partial regularity for stationary solutions with respect to general gravitinos in general dimension remains open.

The article is organized as follows. First we rewrite the Euler--Lagrange equation \eqref{eq:EL} into a local form, where we can explicitly use the structures of the equations. Using a result in \cite{jktwz2016regularity} together with the most recent results developed for critical elliptic systems with an antisymmetric structure developed by Rivi\`{e}re-Struwe~\cite{RS} and further explored in \cite{sharp2014higher, moser2014lpregularity}, we show the full regularity for solutions with some smallness assumptions on certain Morrey norms. Then we consider stationary solutions. We check that this is equivalent to the weak validity of a conservation law. Then we use this formulation to establish the almost monotonicity inequality and finally prove Theorem \ref{thm:partial regularity for stationary solutions-critical gravitino} and Theorem \ref{thm:partial regularity for stationary solutions-low dimension}.

\section{Regularity of weak solutions with small Morrey norms}

In this section we show that the weak solutions of the Euler-Lagrange equations are smooth provided that certain Morrey norms are small, even when the domain is higher dimensional.
The argument differs from that when the domain is two-dimensional, since we no longer  have the  conformal symmetry. In dimension two the smallness of the required Morrey norms came for free, but not in general dimensions.

For simplicity we assume that the equations are located in a Euclidean unit disk $B_1\subset \R^m$ equipped with the standard inner product $g_0$. Then the coordinate frame $\{e_\al=\frac{\p}{\p x^\al}\}$ is orthonormal. The local form of the Euler--Lagrange equations is
\begin{equation}\label{eq:EL-local}
 \begin{split}
  \tau^i(\phi)=& \frac{1}{2}\langle \psi^k, e_\al \cdot \psi^l \rangle e_\al(\phi^j) R^{i}_{\; jkl}
        	-\frac{1}{12}(\na^i R)_{sjkl} \langle \psi^s,\psi^k \rangle \langle \psi^j, \psi^l\rangle  \\
		& -\diverg V^i -\Gamma^{i}_{jk}\langle V^k,\na\phi^j \rangle, \\
   \pd \psi^i  =& -\Gamma^i_{jk}\na \phi^j\cdot\psi^k+|Q\chi|^2\psi^i
		+\frac{1}{3}R^i_{\;jkl}\langle\psi^l,\psi^j\rangle\psi^k
		-e_\al\cdot \na \phi^i \cdot \chi^\al,
 \end{split}
\end{equation}
for $1\le i\le m$, where each $V^i$ is a vector field defined on $M$ via
\begin{equation}\label{eq:def of V}
 \left\langle V^i,W\right\rangle_{g}=\frac{2}{m}\left\langle e_\al\cdot W\cdot \chi^\al, \psi^i\right\rangle_{g_s}, \qquad \forall\; W\in\Gamma(TM).
\end{equation}
\begin{rmk}
Of course, we cannot always transform a given Riemannian metric locally into a Euclidean one. But since regularity is a local issue in the domain, we can consider the metric as a perturbation of the Euclidean one on the unit ball $B_1$, and it is easy to extend the analysis from a Euclidean to such a Riemannian domain. Therefore, for simplicity, we assume that the domain is the unit ball with its Euclidean metric.
\end{rmk}

As  discussed in \cite{jktwz2016regularity} (the calculations there hold true also for general dimensional domains), the equations of the map $\phi$ can be rearranged into a nice form which has an antisymmetric structures, which helps to improve the regularity of weak solutions.
Indeed, isometrically embed $(N,h)$ into some Euclidean space $(\R^K,\delta)$ and denote the push-forwards of $(\phi,\psi)$ by~$(\phi',\psi')$.
Then in \cite{jktwz2016regularity} we have written the equations for the map $\phi$ in the form
\begin{equation}
 \Delta\phi'^i=\Omega^i_j \frac{\p\phi'^j}{\p x^\al}+ Z^i(\psi')-\diverg V'^i,
\end{equation}
for $1\le i\le K$, where the coefficient matrix $\Omega=(\Omega^i_j)$ is antisymmetric: $\Omega^i_j=-\Omega^j_i$, which can be controlled by $|\Omega|\le C(|\na\phi|+|\psi|^2+ |\chi||\psi|)$ and the second term is quartic in $\psi$ in the sense that
\begin{equation}
 |Z^i(\psi')|\le C(N)|\psi'|^4,
\end{equation}
and the vector fields $V'^i$'s are defined in a similar way to \eqref{eq:def of V}.

For the spinors fields $\psi'=(\psi'^1,\cdots,\psi'^K)$,  as  discussed in \cite{jktwz2016regularity}, we have
 \begin{equation}\label{eq-psi'-Componentwisely}
  \begin{split}
   \pd\psi'^a =\;&-\sum_{l,b}\na\phi'^d\cdot\psi'^b\frac{\p\nu_l^b}{\p u^d}\nu_l^a(\phi')+|Q\chi|^2 \psi'^a \\
                 & +\frac{1}{3}\sum_{l,b}\big(\langle\psi'^b,\psi'^d\rangle\psi'^c
                                  -\langle\psi'^c,\psi'^b \rangle\psi'^d\big)
                                \frac{\p \nu_l^b}{\p u^d}(\frac{\p \nu_l}{\p u^c})^{\top,a}
                  -e_\al\cdot \na \phi'^a\cdot\chi^\al.
  \end{split}
 \end{equation}
for each $1\le a\le K$, where $\{\nu_l|l=n+1,\cdots,K\}$ is a local orthonormal frame of~$T^\bot N$ and~$(u^a)_{a=1,\cdots,K}$ are standard global coordinate functions on the Euclidean space $\R^K$. For such Dirac type systems, we know the following fact.
\begin{lemma}[{\cite[Lemma 6.1]{jktwz2016regularity}}]\label{lemma:integrability of spinors}
 Let $B_1\subset \R^m$ be the open unit ball with $m\ge 2$ and let $4<p<\infty$. For a weak solution $\varphi\in \MS{4,2}(B_1, \R^L\otimes \R^K)$ of the nonlinear system
 \begin{equation}
 \pd\varphi^i=A^i_j\varphi^j+B^i, \qquad 1\le i\le K,
 \end{equation}
 where $A\in\MS{2,2}(B_1,\mathfrak{gl}(L,\R)\otimes \mathfrak{gl}(K,\R))$ and $B\in \MS{2,2}(B_1,\R^L\otimes\R^K)$,
 there exists $\varepsilon_0=\varepsilon_0(m,p)>0$ such that if
 \begin{equation}
  \|A\|_{\MS{2,2}(B_1)}\le \varepsilon_0,
 \end{equation}
 then $\varphi\in L^p_{loc}(B_1)$. Moreover, for any $U\Subset B_1$, there holds
 \begin{equation}
  \|\varphi\|_{L^p(U)}\le C(m,p,U)\left(\|\varphi\|_{\MS{4,2}(B_1)}+\|B\|_{\MS{2,2}(B_1)} \right)
 \end{equation}
for some $C(m,p,U)>0$.
\end{lemma}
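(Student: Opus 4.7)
The plan is to use the fundamental solution of the Dirac operator $\pd$ on $\R^m$ together with Adams' embedding for Riesz potentials on Morrey spaces, exploiting the smallness of $\|A\|_{\MS{2,2}}$ as an absorption parameter. For $U \Subset B_1$, pick a cutoff $\eta \in C_c^\infty(B_1)$ with $\eta \equiv 1$ in a neighborhood of $U$, so that $\eta \varphi$ satisfies on $\R^m$
\begin{equation}
 \pd(\eta \varphi) = \eta(A\varphi + B) + \na\eta \cdot \varphi.
\end{equation}
Since the fundamental solution of $\pd$ is pointwise bounded by $C|x|^{1-m}$, this yields
\begin{equation}
 |\eta\varphi|(x) \le C\, I_1(|\eta A\varphi| + |\eta B|)(x) + C(\eta)\,\|\varphi\|_{\MS{4,2}(B_1)},
\end{equation}
where $I_1$ denotes the Riesz potential of order one and the last term absorbs the commutator $\na\eta \cdot \varphi$, which is supported away from $U$ and thus produces a smooth contribution on $U$.

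The core estimate is improved Morrey decay. Fix $x_0 \in U$ and a small $r$, and decompose $\varphi|_{B_r(x_0)} = u + v$ where $\pd u = 0$ with the boundary trace of $\varphi$ and $\pd v = A\varphi + B$ with vanishing boundary trace, using the standard Dirac boundary value problem. Moser iteration for harmonic spinors yields the scale-invariant decay
\begin{equation}
 \int_{B_{\theta r}(x_0)} |u|^4 \le C_0 \theta^m \int_{B_r(x_0)} |u|^4, \qquad 0 < \theta \le \tfrac14,
\end{equation}
while the Morrey H\"older inequality $\|A\varphi\|_{\MS{4/3,2}} \le \|A\|_{\MS{2,2}}\|\varphi\|_{\MS{4,2}}$ together with Adams' embedding $I_1\colon \MS{4/3,2}\to \MS{4,2}$ (and the bounded-domain embedding $\MS{2,2}\hookrightarrow \MS{4/3,2}$ applied to $B$) controls the inhomogeneous part. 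Choosing $\theta$ small enough that $C_0\theta^m < \tfrac14$, and then $\varepsilon_0$ small enough to absorb the $A\varphi$-contribution, produces the Morrey recursion
\begin{equation}
 \int_{B_{\theta r}(x_0)} |\varphi|^4 \le \tfrac12 \int_{B_r(x_0)} |\varphi|^4 + C(\theta)\, r^{m-2+\delta}\,\|B\|_{\MS{2,2}}^4
\end{equation}
for some $\delta > 0$; dyadic iteration then gives $\int_{B_\rho(x_0)} |\varphi|^4 \le C\rho^{m-2+\alpha}$ uniformly in $x_0 \in U$ for some $\alpha > 0$, i.e.\ $\varphi \in \MS{4,\,2+\alpha}_{loc}$, strictly better than the initial $\MS{4,2}$.

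Feeding this sharpened decay back into the H\"older--Adams step raises the effective Morrey exponent of $A\varphi$, and iterating the whole scheme a finite number of times lifts $\varphi$ into $\MS{p,2}_{loc}$ for any prescribed $p < \infty$; since $\MS{p,2}(B_1)\hookrightarrow L^p(B_1)$ at the unit scale, this yields $\varphi \in L^p_{loc}(B_1)$, and tracking constants throughout the iteration produces the quantitative estimate claimed. The main obstacle is the decay step, where the harmonic-spinor decay factor $C_0\theta^m$ must be beaten below $\tfrac14$ while still leaving room to absorb the $A$-term $C\varepsilon_0^4\int|\varphi|^4$ into the left-hand side at the same scale; this delicate two-parameter balance is precisely where the smallness hypothesis $\|A\|_{\MS{2,2}}\le\varepsilon_0$ enters decisively. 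Once the decay is established, the subsequent bootstrap to arbitrary $L^p$ is routine.
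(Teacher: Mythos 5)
The paper does not give its own proof of this lemma; it simply invokes it as Lemma~6.1 of \cite{jktwz2016regularity}. That said, your proposal is the standard route for such critical Dirac systems and is essentially the argument used in that reference and in its model, the Dirac--harmonic case of Wang--Xu \cite{wang2009regularity}: represent $\varphi$ via the fundamental solution of $\pd$ (pointwise $\sim|x|^{1-m}$), pass through the Riesz potential $I_1$, combine the Morrey--H\"older bound $\|A\varphi\|_{\MS{4/3,2}}\le\|A\|_{\MS{2,2}}\|\varphi\|_{\MS{4,2}}$ with Adams' theorem $I_1\colon\MS{4/3,2}\to\MS{4,2}$, absorb the small $A$-contribution, and then feed a gain in the Morrey decay parameter back into the H\"older--Adams step to lift the integrability exponent. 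The overall scheme is sound.

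Two points deserve tightening. First, the decomposition step: solutions of $\pd u=0$ are componentwise harmonic (since $\pd^2=-\Delta$), so the interior decay $\int_{B_{\theta r}}|u|^4\le C\theta^m\int_{B_r}|u|^4$ follows directly from the mean value inequality; invoking Moser iteration is unnecessary (and, for a first-order Dirac system with no divergence structure, is not the natural tool). Also, because the Dirichlet boundary value problem is not well posed for $\pd$, the cleaner formulation is to take $v$ as the Newtonian-type particular solution on $B_r$ (convolution with the fundamental solution of $\chi_{B_r}(A\varphi+B)$) and set $u=\varphi-v$, rather than prescribing boundary traces for $u$ directly. Second, the recursion as written lives at the wrong level: $\|v\|_{L^4(B_r)}$ is controlled by $\varepsilon_0\,\|\varphi\|_{\MS{4,2}(B_r)}\,r^{(m-2)/4}$ (via Adams), not by $\varepsilon_0\,\|\varphi\|_{L^4(B_r)}$, so the dyadic inequality
$\int_{B_{\theta r}}|\varphi|^4\le\tfrac12\int_{B_r}|\varphi|^4+C r^{m-2+\delta}\|B\|^4$ does not close pointwise in $r$. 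One must run the decay argument on the Morrey quantity $\omega(\rho)\coloneqq\sup_{x}\rho^{2-m}\int_{B_\rho(x)}|\varphi|^4$ and show that $\omega(\theta\rho)\le\tfrac12\omega(\rho)+C\rho^\delta$, which yields $\varphi\in\MS{4,2-\alpha}_{loc}$ for some $\alpha>0$ (note the sign: with the paper's convention $\|f\|^p_{\MS{p,\lambda}}=\sup r^{\lambda-m}\int_{B_r}|f|^p$, a smaller second index is the stronger space, so your $\MS{4,2+\alpha}$ should read $\MS{4,2-\alpha}$). After that correction, the H\"older--Adams bootstrap does indeed raise the integrability exponent strictly in each pass, terminating at any prescribed $p<\infty$; the remainder of your argument is fine.
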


Thus, assuming that the vector spinors $\psi'\in \MS{4,2}(B_1)$, the above lemma implies that~$\psi'\in L^p_{loc}(B_1)$ for any $p\in [1,\infty)$.

Now we turn to the equations for the map $\phi'$. First recall the following result in \cite{sharp2014higher}, which is an extension of the regularity result in \cite{RS}. Note that the index convention there is different from ours.
\begin{thm}[{\cite[Theorem 1.2]{sharp2014higher}}]\label{thm:Ben}
 For $m\ge 2$, let $u\in W^{1,2}(B_1,\R^K)$ with $\na u\in \MS{2,2}(B_1, \R^m\otimes \R^K)$, $\Omega\in \MS{2,2}(B_1,\mathfrak{so}(K)\otimes \R^m)$ and $f\in L^p(B_1)$ for $\frac{m}{2}<p<m$, weakly solve
 \begin{equation}
  -\Delta u=\Omega\cdot\na u+f.
 \end{equation}
 Then for any $U\Subset B_1$ there exists $\varepsilon_1=\varepsilon_1(m,K,p)>0$ and $C=C(m,K,p,U)>0$ such that whenever $\|\Omega\|_{\MS{2,2}(B_1)}\le \varepsilon_1$ we have
 \begin{equation}
  \|\na^2 u\|_{\MS{\frac{2p}{m},2}}+\|\na u\|_{\MS{\frac{2p}{m-p},2}}\le C(m,K,p,U)\left(\|u\|_{L^1(B_1)}+\|f\|_{L^p(B_1)}\right).
 \end{equation}

\end{thm}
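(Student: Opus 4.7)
The plan is to adapt the Coulomb-gauge argument of Rivi\`ere-Struwe~\cite{RS} (itself generalizing Rivi\`ere's two-dimensional construction) and then upgrade their Campanato/H\"older estimate to the sharp Morrey-space bounds claimed here, by invoking Adams' theorem for Riesz potentials on Morrey spaces and a H\"older inequality that converts the $L^p$-hypothesis on $f$ into Morrey control.

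First, exploiting the antisymmetry of $\Omega$ and the smallness $\|\Omega\|_{\MS{2,2}(B_1)}\le\varepsilon_1$, I would invoke the Rivi\`ere-Struwe Coulomb gauge construction to produce $P\in W^{1,2}(B_1,\SO(K))$ and $\xi\in W^{1,2}(B_1,\mathfrak{so}(K)\otimes\wedge^2\R^m)$ with $\na P,\na\xi\in\MS{2,2}$, both bounded by $C\|\Omega\|_{\MS{2,2}}$, and satisfying the gauge identity $\na P-P\Omega=-\na^\perp\xi$. Under this gauge the equation rewrites as the divergence-form conservation law
\begin{equation*}
-\diverg(P\na u)=\na^\perp\xi\cdot\na u+Pf.
\end{equation*}
The bilinear term $\na^\perp\xi\cdot\na u$ has the Jacobian/div-curl structure, so by a Coifman-Lions-Meyer-Semmes type argument it produces a local Hardy-space gain of integrability, quantitatively controlled by $\|\na\xi\|_{\MS{2,2}}\|\na u\|_{\MS{2,2}}$; this is the engine of the iteration.

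Second, I would prove a quantitative decay estimate on small balls $B_r(x_0)\Subset B_1$. Freezing $P$ at $x_0$ and splitting $u=v+w$ with $-\Delta v=0$, the harmonic replacement $v$ enjoys sharp Campanato decay, while $w$ is controlled by Hardy-BMO duality (for the bilinear piece) and by Adams' Riesz potential bound in Morrey spaces (for $Pf$). Using $\|\na\xi\|_{\MS{2,2}}\le C\varepsilon_1$ as the contraction parameter, a hole-filling iteration gives
\begin{equation*}
r^{2-m}\int_{B_r(x_0)}|\na u|^2\,\dd x\le C\bigl(\tfrac{r}{R}\bigr)^{2\alpha}R^{2-m}\int_{B_R(x_0)}|\na u|^2\,\dd x+Cr^{2\alpha}\|f\|_{L^p(B_1)}^2
\end{equation*}
for some $\alpha=\alpha(m,p)>0$. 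To upgrade to the sharp Morrey exponents, observe that H\"older's inequality upgrades $f\in L^p$ with $m/2<p<m$ to $f\in\MS{2p/m,2}(B_1)$ via the elementary bound $r^{2-m}\int_{B_r}|f|^{2p/m}\le C\|f\|_{L^p}^{2p/m}$. Adams' theorem maps $\MS{2p/m,2}$ to $\MS{2p/(m-p),2}$ via $I_1$; Calder\'on-Zygmund in Morrey spaces yields $\|\na^2 u\|_{\MS{2p/m,2}}\le C\|\Delta u\|_{\MS{2p/m,2}}$; and the Morrey-algebra product $\MS{2,2}\cdot\MS{2p/(m-p),2}\hookrightarrow\MS{2p/m,2}$ absorbs $\Omega\cdot\na u$, closing the bootstrap in terms of $\|u\|_{L^1(B_1)}+\|f\|_{L^p(B_1)}$.

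The main obstacle will be closing this bootstrap: if one tries the naive chain, $\na u\in\MS{2,2}$ times $\Omega\in\MS{2,2}$ inverted by $I_1$ returns only $\na u\in\MS{2,2}$ with no improvement, so the circularity must be broken by the Hardy-space gain from $\na^\perp\xi\cdot\na u$; threading this gain through Adams' estimates at each scale, while controlling both the smallness $\varepsilon_1$ and the cutoff $U\Subset B_1$, is the delicate point. A secondary difficulty is that the gauge $P$ lies only in $W^{1,2}$-Morrey and is not Lipschitz, so products such as $P\cdot\na u$ have to be handled via the Morrey algebra rather than pointwise $L^\infty$ bounds.
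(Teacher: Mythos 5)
This statement is not proved in the paper: it is quoted verbatim as \cite[Theorem 1.2]{sharp2014higher} and used as a black box, so there is no ``paper's own proof'' to compare against. Judged instead against Sharp's actual argument, your outline hits the right ingredients: Rivi\`ere--Struwe Coulomb gauge and the resulting conservation law, the div-curl/Hardy gain from the antisymmetric structure as the engine that breaks the $M^{2,2}\cdot M^{2,2}\to M^{1,2}$ deadlock, the H\"older inequality showing $L^p\hookrightarrow M^{2p/m,2}$ for $m/2<p<m$, Adams' $I_1\colon M^{2p/m,2}\to M^{2p/(m-p),2}$, the Morrey--Calder\'on--Zygmund bound for $\nabla^2 u$, and the product $M^{2,2}\cdot M^{2p/(m-p),2}\hookrightarrow M^{2p/m,2}$ to absorb $\Omega\cdot\nabla u$; these exponent computations all check out. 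What is genuinely underdeveloped, and where the real work lies in Sharp's paper, is the bridge between the initial Rivi\`ere--Struwe gain (which only gives $\nabla u\in M^{2,2-\delta}$ for some small $\delta>0$, i.e.\ $u\in C^{0,\delta/2}$) and the sharp target $M^{2p/(m-p),2}$: one must run a finite iteration in which, at each stage, the representation $\nabla u = \nabla\,\text{(harmonic)} + I_1(\Omega\cdot\nabla u) + I_1(f)$ is combined with the algebra/Adams maps above to strictly increase the Morrey exponent of $\nabla u$, and one must verify that the number of steps is finite and that the constants accumulate only through $\|u\|_{L^1}$, $\|f\|_{L^p}$ and $\varepsilon_1$; you flag this as ``the delicate point'' but do not carry it out. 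You would also need to record that the Morrey CZ inequality and Adams' theorem both require the lower exponent to exceed $1$, which is precisely why $p>m/2$ is needed, and that all of this is done locally so the $U\Subset B_1$ loss of domain is bookkept by cutoff functions. So: a correct and well-chosen skeleton aligned with Sharp's method, but the quantitative bootstrap that actually produces the stated inequality is left as a promissory note.
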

Also note that once $\na u\in \MS{\frac{2p}{m-p},2}(B_1)$, then by the Dirichlet growth theorem we conclude that $u\in C^{2-\frac{m}{p}}_{loc}(B_1)$.

Now we shall deal with our system. We take the initial assumption as follows:
\begin{align}
 \phi'&\in W^{1,2}(B_1, \R^K), \hskip40pt \na \phi'\in \MS{2,2}(B_1,\R^m\otimes\R^K),\\
 \psi'&\in\MS{4,2}(B_1, \R^L\otimes\R^K), \quad \na\psi'\in L^{\frac{4}{3}}(B_1,\R^m\otimes\R^L\otimes\R^K).
\end{align}
Note that $\MS{4,2}(B_1)\subset L^4(B_1)$, so these assumptions are sufficient to ensure the finiteness of the action functional.
Moreover, by Lemma \ref{lemma:integrability of spinors}, we directly get $\psi'\in L^p_{loc}(B_1)$ for any ~$p<\infty$, and it in turn follows that $\psi'\in W^{1,2-o}_{loc}(B_1)$. This leads to $\diverg V'^i\in L^{2-o}_{loc}(B_1)$. Here a function~$f\in L^{p-0}$ means that $f \in L^{q}$ for any $1\leq q<p$, see Definition 1. in \cite{jwz2017coarse}.

In the case $m=\dim {M}=3$, we can apply Theorem \ref{thm:Ben} to conclude that $\na\phi'\in L^{4-o}_{loc}(B_1)$, and hence by a bootstrap argument we can finally obtain that $(\phi',\psi')$ is smooth.

However, if $m=\dim{M}\ge 4$, Theorem \ref{thm:Ben} is not applicable anymore. We then need a result from \cite{moser2014lpregularity}, where the following theorem was shown:

\begin{thm}[{\cite{moser2014lpregularity}}]
 For $m\ge 3$ and $p\in (1,\infty)$ there exists an $\varepsilon_1=\varepsilon_1(m,p)>0$ such that if $u\in W^{1,2}(B_1,N)$ and $f\in L^p(B_1,\R^K)$ satisfy
 \begin{equation}
  \Delta u+A(u)\left(\nabla u,\nabla u\right)=f
 \end{equation}
 in $B_1$ weakly and  $\|\Omega\|_{\MS{2,2}}\le \varepsilon_1$, then $u\in W^{2,p}_{loc}(B_1,\R^K)\cap W^{1,2p}_{loc}(B_1,N)$.
\end{thm}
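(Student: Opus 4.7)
The plan is to reduce to the Rivi\`ere--Struwe/Sharp framework via a Coulomb gauge in the Morrey scale, and then bootstrap by Calder\'on--Zygmund iteration. The first observation is that $A(u)(\nabla u,\nabla u)$ admits the standard antisymmetric potential form: since $A(u)(X,Y)\perp T_uN$ while each $\partial_\alpha u^j\in T_uN$, one can write $A(u)(\nabla u,\nabla u)=-\Omega\cdot\nabla u$ for an $\mathfrak{so}(K)$-valued one-form $\Omega=(\Omega^i_j)$ with $|\Omega|\le C|\nabla u|$, so that $\|\Omega\|_{\MS{2,2}}$ is controlled by $\|\nabla u\|_{\MS{2,2}}$. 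The equation then reads $-\Delta u=\Omega\cdot\nabla u-f$, which fits Theorem~\ref{thm:Ben}; however, that result delivers only Morrey conclusions in the range $m/2<p<m$, while we need plain $L^p$ estimates for every $p\in(1,\infty)$.

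To go further I would invoke a Coulomb-gauge lemma of Uhlenbeck type in the Morrey scale (the Rivi\`ere--Struwe construction, refined by Sharp): under the smallness assumption $\|\Omega\|_{\MS{2,2}}\le\varepsilon_1$, there exist $P\in W^{1,2}(B_1,SO(K))$ and an $\mathfrak{so}(K)$-valued $(m-2)$-form $\xi$ with appropriate Morrey bounds satisfying a Coulomb-type relation
\begin{equation}
 P^{-1}\nabla P+P^{-1}\Omega P=*\,d\xi.
\end{equation}
Multiplying the PDE by $P$ rewrites it schematically in divergence form $\diverg(P\nabla u)=(*d\xi)\cdot(P\nabla u)-Pf$, which is considerably more tractable than the bare $\Omega\cdot\nabla u$ term.

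With the divergence form in hand, I would first bootstrap using Hodge-theoretic and Riesz-potential estimates together with the Morrey--Adams embedding to push $\nabla u$ from $\MS{2,2}$ into $\MS{q,2}$ for some $q>m$. The Dirichlet growth theorem then gives $u\in C^{0,\alpha}_{\mathrm{loc}}(B_1)$, so $A(u)$ is locally bounded and continuous. Plugging back, $\Delta u=f-A(u)(\nabla u,\nabla u)\in L^{q/2}_{\mathrm{loc}}$, and classical interior Calder\'on--Zygmund estimates give $u\in W^{2,q/2}_{\mathrm{loc}}$, hence $\nabla u\in L^{(q/2)^*}_{\mathrm{loc}}$ by Sobolev embedding. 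Iterating this round finitely many times one reaches $\nabla u\in L^{2p}_{\mathrm{loc}}$ and therefore $u\in W^{2,p}_{\mathrm{loc}}$.

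The delicate step is the Morrey Coulomb gauge: one must preserve the $\varepsilon_1$-smallness under a nonlinear rotation of $\Omega$, and the usual Uhlenbeck construction (based on Hodge decompositions on small balls) has to be implemented in the Morrey scale with sharp constants depending only on $m$ and $p$. Once the gauge is in place, the divergence structure combined with standard Calder\'on--Zygmund theory handles the full range $p\in(1,\infty)$: the large-$p$ end needs only the initial H\"older gain from the gauge step, while the small-$p$ end (near $p=1$) is covered via Adams-type improvements and duality against the $W^{2,p'}$ scale.
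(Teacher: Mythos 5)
The paper does not prove this statement; it imports it as a black box from Moser \cite{moser2014lpregularity}. So the comparison must be made against Moser's original argument, and the first thing to say is that your proposal is not it, and moreover contains a gap that cannot be repaired along the route you chose.

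The fatal step is your claim that, after the Coulomb-gauge rewriting, one can push $\nabla u$ into $\MS{q,2}$ for some $q>m$ and then invoke the Dirichlet growth theorem to conclude $u\in C^{0,\alpha}_{\mathrm{loc}}$. This cannot be true in the range $p\le m/2$. Take the trivial case $N=\R^K$, so $A\equiv 0$ and $\Omega\equiv 0$, and the equation is simply $\Delta u = f$ with $f\in L^p(B_1)$. For $p\le m/2$, the optimal conclusion is $u\in W^{2,p}_{\mathrm{loc}}$, which does not embed into $C^0$, let alone $C^{0,\alpha}$; one can certainly construct $f\in L^p$ with $\|\nabla u\|_{\MS{2,2}}$ as small as one likes for which $u$ is discontinuous. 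Consequently no amount of gauge-fixing or Riesz-potential iteration can produce $\nabla u\in\MS{q,2}$ with $q>2$ (which already forces H\"older continuity of $u$) from data $f$ that are merely $L^p$ with $p\le m/2$. Your bootstrap ``get H\"older, then Calder\'on--Zygmund iterate'' therefore only works for $p>m/2$ -- but that range is already covered by Sharp's Theorem~\ref{thm:Ben}, which the paper quotes separately. The entire point of invoking Moser's theorem in the paper is to cover the remaining range $p\le m/2$, and your proposal breaks down there.

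The closing sentence of your proposal (``the small-$p$ end is covered via Adams-type improvements and duality against the $W^{2,p'}$ scale'') gestures in the right direction but is not an argument, and it is inconsistent with the body of the proof, which passes through H\"older continuity. Moser's actual strategy avoids any pointwise or H\"older intermediate step: he works directly at the $W^{2,p}$ level, proving an a priori estimate for the operator $u\mapsto \Delta u + \Omega\cdot\nabla u$ under the smallness of $\|\Omega\|_{\MS{2,2}}$, the key being Fefferman--Phong/Adams-type inequalities that control $\|\Omega\cdot\nabla u\|_{L^p}$ by $\|\Omega\|_{\MS{2,2}}$ times $\|u\|_{W^{2,p}}$, so that the bilinear term can be absorbed. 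This, combined with an approximation argument to justify the a priori bounds for genuine weak solutions and a duality argument to reach the full range $p\in(1,\infty)$, is what makes the theorem go. If you want to give a proof, you should abandon the H\"older detour and instead set up the $L^p$ estimate for the linear model $\Delta + \Omega\cdot\nabla$ directly, with the Morrey smallness entering via such product estimates rather than via a Dirichlet growth theorem.
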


Here $A$ stands for the second fundamental form of the isometric embedding $N\hookrightarrow \R^K$. 
With these tools in hand, we can get the following conclusion about the regularity of weak solutions of our model.
\begin{thm}
 Suppose $\phi'\in\MS{2,2}(B_1,\R^K)$ and $\psi'\in\MS{4,2}(B_1,\R^L\otimes\R^K)$ satisfy \eqref{eq:EL-local} in a weak sense, and suppose that $\chi$ is smoothly bounded on $B_1$. Then there exists $\varepsilon=\varepsilon(m,\chi)>0$ such that if
 \begin{equation}
  \|\na\phi'\|_{\MS{2,2}(B_1)}+\|\psi'\|_{\MS{4,2}(B_1)}\le \varepsilon,
 \end{equation}
 then $\phi'$ and $\psi'$ are actually smooth on $B_1$.
\end{thm}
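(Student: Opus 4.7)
The plan is a two-sided bootstrap: first use Lemma~\ref{lemma:integrability of spinors} to promote $\psi'$ to $L^p_{loc}$ for every finite $p$; then use an antisymmetric-potential elliptic theorem (Theorem~\ref{thm:Ben} when $m=3$, the Moser theorem quoted above when $m\ge 4$) to promote $\na\phi'$; then alternate. Throughout I work with the extrinsic system already set up in the section: for $\phi'$, the reformulation
\[
-\Delta\phi'^i=\Omega^i_j\na\phi'^j+Z^i(\psi')-\diverg V'^i
\]
with $\Omega$ antisymmetric and $|\Omega|\le C(|\na\phi'|+|\psi'|^2+|\chi||\psi'|)$, and for $\psi'$ the componentwise system \eqref{eq-psi'-Componentwisely}. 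The case $m=2$ is already covered by~\cite{jktwz2016regularity}, so I concentrate on $m\ge 3$.

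\textbf{Spinor step.} I cast \eqref{eq-psi'-Componentwisely} in the form $\pd\psi'^a=A^a_b\psi'^b+B^a$ of Lemma~\ref{lemma:integrability of spinors}, placing in $A$ the linear-in-$\psi'$ terms with coefficient $\na\phi'$ together with the cubic-in-$\psi'$ terms (read as $(\psi'\otimes\psi')\cdot\psi'$), and placing in $B$ the terms $|Q\chi|^2\psi'$ and $e_\al\cdot\na\phi'\cdot\chi^\al$. Using $\|\psi'\otimes\psi'\|_{\MS{2,2}}\le\|\psi'\|_{\MS{4,2}}^2$ and the smoothness of $\chi$, I obtain $\|A\|_{\MS{2,2}}\le C(\varepsilon+\varepsilon^2)$ and $\|B\|_{\MS{2,2}}\le C(\chi)\,\varepsilon$. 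Choosing $\varepsilon$ small enough, Lemma~\ref{lemma:integrability of spinors} delivers $\psi'\in L^p_{loc}(B_1)$ for every finite $p$. Feeding this back into the Dirac equation, together with $\na\phi'\in L^2$ and standard $L^p$ theory for $\pd$, gives $\na\psi'\in L^{2-o}_{loc}$, whence $\diverg V'\in L^{2-o}_{loc}$ and $Z(\psi')\in L^q_{loc}$ for all finite $q$.

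\textbf{Map step and iteration.} The antisymmetric potential satisfies $\|\Omega\|_{\MS{2,2}}\le C(\chi)(\varepsilon+\varepsilon^2)$, so $\varepsilon$ can be chosen small enough to trigger the hypotheses of Theorem~\ref{thm:Ben} (for $m=3$) or of Moser's theorem (for $m\ge 4$), applied with a right-hand side $f\in L^{2-o}_{loc}$ assembled from $Z(\psi')$, $\diverg V'$, and the $\chi\psi'\na\phi'$ remainder that is not of pure $A(\phi')(\na\phi',\na\phi')$ type. For $m=3$, Theorem~\ref{thm:Ben} with $p$ just below $2$ upgrades $\na\phi'$ to $\MS{q,2}_{loc}\subset L^{4-o}_{loc}$. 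For $m\ge 4$, Moser's theorem with $p=2-o$ upgrades $\phi'$ to $W^{2,2-o}_{loc}$, hence $\na\phi'\in L^{m(2-o)/(m-2+o)}_{loc}$, which strictly beats $L^2$. Re-inserting this improved integrability of $\na\phi'$ into the Dirac equation raises the integrability of $\na\psi'$, which raises in turn that of $\diverg V'$, and so on; finitely many iterations produce $\na\phi',\na\psi'\in L^\infty_{loc}$, after which standard linear Schauder theory and a final Dirac bootstrap promote $(\phi',\psi')$ to $C^\infty$.

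The main obstacle I anticipate is the divergence term $\diverg V'$ on the right-hand side of the map equation: it is not a priori an $L^p$-function but only a distribution, so both Sharp's and Moser's theorems require that at each step $V'\simeq\chi\psi'$ be weakly differentiable with $(\na\chi)\psi'+\chi\na\psi'$ in the relevant $L^p$. This is what forces the order ``upgrade $\psi'$ first, then $\na\psi'$, then $\na\phi'$, then back'' and it is what actually couples the two bootstraps. A secondary subtlety is that Moser's threshold $\varepsilon_1(m,p)$ depends on $p$ and finitely many values of $p$ appear along the iteration; one picks the minimum of the corresponding $\varepsilon_1$'s to obtain a single $\varepsilon=\varepsilon(m,\chi)$ that closes the argument.
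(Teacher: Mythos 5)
Your proposal follows essentially the same path as the paper's, which leaves most of the bootstrap implicit after presenting the tools: first Lemma~\ref{lemma:integrability of spinors} to boost $\psi'$ to $L^p_{loc}$ for all finite $p$ (hence $\na\psi'\in L^{2-o}_{loc}$ and $\diverg V'\in L^{2-o}_{loc}$), then Theorem~\ref{thm:Ben} for $m=3$ or Moser's theorem for $m\ge 4$ to boost $\na\phi'$, followed by a finitely-iterated bootstrap. Your two flagged subtleties — that $\diverg V'$ forces the ordered iteration $\psi'\to\na\psi'\to\diverg V'\to\na\phi'$, and that the Moser/Sharp smallness thresholds depend on $p$ so one must take the minimum over the finitely many exponents used — are both correct and are exactly the points where care is needed.
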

By a patching argument, this implies Theorem \ref{thm:full regularity with small Morrey norms}.

\section{Partial regularity for stationary solutions}\label{sect:stationary solutions}

In this section, we shall study stationary weak solutions and show some partial regularity results.

Let us first give the precise definition of stationary solutions to our model. Suppose~$f_t\colon M\to M$ is a family of diffeomorphisms induced by a tangent vector field $X\in\Gamma(TM)$, i.e. $(f_t)_t$ is the induced flow.
As shown in \cite{jktwz2017symmetries} these diffeomorphisms induces transformations on the four arguments of the action functional which leaves the action invariant:
\begin{equation}
 \A(\phi_{f_t},\psi_{f_t};g_{f_t},\chi_{f_t})\equiv \A(\phi,\psi;g,\chi)
\end{equation}
where the concrete expressions of the transformation formulas are given in \cite{jktwz2017symmetries}.
A \emph{stationary} solution is a solution $(\phi,\psi)$ of the Euler--Lagrange equations \eqref{eq:EL} and also
\begin{equation}
 \frac{\dd}{\dd t}\Big|_{t=0} \A(\phi_t,\psi_t;g,\chi)=0
\end{equation}
where
\begin{equation}
 \phi_t\coloneqq \phi\circ f_t\colon (M,g)\to (N,h)
\end{equation}
and
\begin{equation}
 \psi_t\coloneqq \be_t^{-1}\circ F_t^{-1}\circ \psi^j\circ f_t\otimes \phi_t^*\left(\frac{\p}{\p y^j}\right)\in \Gamma(S_g\otimes \phi_t^*TN).
\end{equation}
Here we continue to use the notation introduced in \cite{jktwz2017symmetries}. Note that we have to use the isomorphism $\be_t$ to map the spinors in $S_{f_t^* g}$ to the spinor bundle $S_g$. Now consider the diffeomorphism $f_{-t}$, which is the inverse of $f_t$. By the diffeomorphism invariance we have
\begin{equation}
 \A(\phi_t,\psi_t;g,\chi)=\A(\phi, (\be_{-t}\otimes\mathds{1})\psi;g_{f_{-t}},\chi_{f_{-t}}).
\end{equation}
Note that $\psi_t$ is transformed to
\begin{equation}
 \begin{split}
  F_{-t}^{-1}\circ \psi_t^j\circ f_{-t}& \otimes \phi^*\left(\frac{\p}{\p y^j}\right) \\
 &=\be_{-t}\circ \be_{-t}^{-1}\circ  F_{-t}^{-1}\circ \be_t^{-1}\circ F_t^{-1}\circ \psi^j\circ f_t\circ f_{-t}  \otimes \phi^*\left(\frac{\p}{\p y^j}\right) \\
 &=\be_{-t}\psi^j\otimes \phi^*\left(\frac{\p}{\p y^j}\right)
  \equiv (\be_{-t}\otimes\mathds{1})\psi \in\Gamma(S_{g_{f_{-t}}}\otimes \phi^*TN),
 \end{split}
\end{equation}
while the other fields $g$ and $\chi$ are transformed into
\begin{equation}
 g_{f_{-t}}=f_{-t}^* g
\end{equation}
and
\begin{equation}
 \chi_{f_{-t}}= F_{-t}^{-1}\circ \chi^\al \circ f_{-t}\otimes (Tf_{-t})^{-1} e_\al.
\end{equation}
Therefore, for a stationary solution $(\phi,\psi)$, we have
\begin{equation}
 \begin{split}
  0=& \frac{\dd}{\dd t}\Big|_{t=0} \A(\phi_t,\psi_t;g,\chi) \\
  =& \frac{\dd}{\dd t}\Big|_{t=0} \A(\phi, (\be_{-t}\otimes\mathds{1})\psi;g_{f_{-t}},\chi_{f_{-t}})\\
  =& \int_M -\frac{1}{2}\left\langle \Lie_{-X} g, T\right\rangle
             +\left\langle \Lie^{S_g\otimes TM}_{-X}\chi, J\right\rangle \dv_g
 \end{split}
\end{equation}
where $T$ stands for the energy-momentum tensor of $\A$ and $J$ stands for the supercurrent \cite{jktwz2017symmetries}. Using the adjointness of the divergence operators and the Lie derivative operators, we see that
\begin{equation}
 0=\int_M \langle -X, \diverg_g(T)\rangle +\langle-X, \diverg_\chi(J)\rangle\dv_g.
\end{equation}
Since $X$ can be arbitrary, we conclude that
\begin{equation}\label{eq:conservation law}
 \diverg_g(T)+\diverg_\chi(J)=0.
\end{equation}
\begin{Def}\label{def:stationary solutions}
 A stationary solution is a solution of \eqref{eq:EL} which also satisfies \eqref{eq:conservation law} weakly. More precisely, for any vector field $X\in\Gamma(TM)$,
 \begin{equation}\label{eq:stationary condition}
  \int_M -\frac{1}{2}\langle\Lie_X g, T\rangle+\langle\Lie^{S_g\otimes TM}_X \chi, J\rangle\dv_g=0.
 \end{equation}

\end{Def}

\begin{rmk}
 As in the harmonic map case, strong (say $C^2$) solutions are always stationary, but  general weak solutions need not be stationary.
\end{rmk}

For stationary solutions, we have the following type of monotonicity formula, which helps to obtain the Morrey type control.

\begin{prop}\label{monotone}
 Let $(\phi,\psi)\in W^{1,2}(U, N)\times S^{1,\frac{4}{3}}(U, \R^L\otimes \R^K)$ be a local stationary solution on $U$. Then for any $x_0\in U$ and any $0<R_1\le R_2< \dist(x_0,\p U)$,
 \begin{equation}\label{eq:almost monotone formula}
  \frac{1}{R_2^{m-2}}\int_{B_{R_2}}|\dd\phi|^2\dd x-\frac{1}{R_1^{m-2}}\int_{B_{R_1}}|\dd\phi|^2\dd x
  =\int_{R_1}^{R_2} F(r)\dd r
 \end{equation}
 where $F(r)$ is given by
 \begin{equation}
  \begin{split}
   F(r)\equiv
   &\frac{1}{r^{m-2}}
    \int_{\p B_r}2\left|\frac{\p\phi}{\p r}\right|^2+\langle\psi,\gamma(\p_r)\widetilde{\na}_{\p_r}\psi\rangle
            +\frac{4}{m}\langle\gamma(e_\al)\gamma(\p_r)\chi^\al\otimes\phi_*(\p_r),\psi\rangle \\
   &\qquad\qquad\qquad\qquad\qquad\qquad\qquad
              +\langle\psi,\D\psi\rangle-|Q\chi|^2|\psi|^2-\frac{1}{2}R(\psi)\dd s \\
   -&\frac{1}{r^{m-1}}\int_{B_r} \left((m-1)\langle\psi,\D\psi\rangle
                             +(2-m)|Q\chi|^2|\psi|^2+\frac{4-3m}{6}R(\psi)\right)\dd x \\
   +&\frac{2}{m}\frac{1}{r^{m-2}}\int_{B_r} 2\langle\na^s_{\p_r}\chi^\al\otimes\phi_*e_\be,
                                            \gamma(e_\be)\gamma(e_\al)\psi\rangle
             +|\psi|^2\langle\na^s_{\p_r}\chi^\al,\gamma(e_\be)\gamma(e_\al)\chi^\be\rangle \dd x.
  \end{split}
 \end{equation}
\end{prop}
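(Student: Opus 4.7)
The plan is to extract the monotonicity-type identity directly from the stationarity condition \eqref{eq:stationary condition} by testing against a radial cutoff vector field, then differentiating in the radius.

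First, I would assume without loss of generality that $x_0 = 0$ and that $B_{R_2}$ is contained in a Euclidean normal coordinate chart, so we may treat $g$ as the Euclidean metric to leading order. I would write down explicit expressions for the energy-momentum tensor $T$ and the supercurrent $J$ appearing in \eqref{eq:stationary condition}, using the variational formulas derived in \cite{jktwz2017symmetries}. The relevant content for us is that $T_{\alpha\beta} = 2\,\langle \partial_\alpha \phi, \partial_\beta\phi\rangle - |\dd\phi|^2 g_{\alpha\beta} + (\text{spinor and gravitino pieces})$, together with the trace identity $\operatorname{tr}_g T = (2-m)|\dd\phi|^2 + (\text{lower-order terms})$ that follows from the Euler--Lagrange equations.

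Second, I would choose the test field $X_\eta = \eta(|x|)\,x^\alpha \partial_\alpha$ with $\eta \in C^1_c([0, R_2))$. A direct computation in Euclidean coordinates gives
\begin{equation}
(\Lie_{X_\eta} g)_{\alpha\beta} = 2\eta(|x|)\,\delta_{\alpha\beta} + \eta'(|x|)\,\frac{x_\alpha x_\beta + x_\beta x_\alpha}{|x|},
\end{equation}
so $-\tfrac12\langle \Lie_{X_\eta} g, T\rangle$ splits into a ``bulk'' contribution weighted by $\eta$ (controlled by $\operatorname{tr}_g T$) and a ``radial'' contribution weighted by $\eta'(|x|)|x|$ (controlled by $T(\partial_r,\partial_r)$). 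A parallel computation for the spin--tangent Lie derivative $\Lie^{S_g \otimes TM}_{X_\eta}\chi$ produces a derivation part proportional to $\eta$ and a rescaling part proportional to $\eta'(|x|)|x|$, whose pairing with $J$ yields the $\chi$-dependent integrands in $F(r)$.

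Third, I would let $\eta$ approximate the indicator $\mathds{1}_{B_r}$ from below, so that $-\eta'(|x|)$ concentrates as a surface measure on $\partial B_r$. The radial parts then become boundary integrals over $\partial B_r$ involving $|\partial_r \phi|^2$, $\langle\psi, \gamma(\partial_r)\widetilde{\nabla}_{\partial_r}\psi\rangle$, and the gravitino--spinor pairing on $\partial B_r$, which matches the first line of $F(r)$. The bulk parts produce, after using the trace identity to replace $\operatorname{tr}_g T$ by $(2-m)|\dd\phi|^2$ plus spinor terms, the second line of $F(r)$. The derivative part of $\Lie^{S_g \otimes TM}_{X_\eta}\chi$ that survives the limit yields the third line. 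Collecting everything and reorganizing, the identity becomes
\begin{equation}
\frac{d}{dr}\!\left[\frac{1}{r^{m-2}}\int_{B_r}|\dd\phi|^2\,\dd x\right] = F(r),
\end{equation}
and integrating from $R_1$ to $R_2$ gives \eqref{eq:almost monotone formula}. The passage from smooth test fields to this limiting $\eta$ is justified by the Sobolev regularity $(\phi,\psi)\in W^{1,2}\times S^{1,4/3}$ together with a standard approximation argument.

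The main obstacle is the bookkeeping of the many cross terms coming from the spinor--gravitino couplings: one must verify that, after applying the Euler--Lagrange equations \eqref{eq:EL} to rewrite $\operatorname{tr}_g T$, the assorted boundary and interior contributions assemble into precisely the coefficients $2$, $\tfrac{4}{m}$, $(m-1)$, $(2-m)$, $\tfrac{4-3m}{6}$, and $\tfrac{2}{m}$ displayed in $F(r)$. A secondary technical point is that the Lie derivative $\Lie^{S_g \otimes TM}_{X}\chi$ on a mixed spin--tangent bundle involves both the spinorial lift of $X$ and the tangent derivation; one must use the explicit formula from \cite{jktwz2017symmetries} to separate these contributions correctly.
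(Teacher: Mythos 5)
Your overall strategy matches the paper's: test the stationarity identity against the radial cutoff field $Y=\eta(|x|)\vec{x}$, split $\Lie_Y g$ into an $\eta$-weighted bulk piece and an $\eta'(|x|)|x|$-weighted surface piece, let $\eta$ approximate $\mathds{1}_{B_r}$, simplify via the trace of $T$ and the Euler--Lagrange equations, and recognize the result as the radial derivative of $r^{2-m}\int_{B_r}|\dd\phi|^2$ before integrating. That skeleton is exactly how the proof goes.

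There is, however, a genuine error in your treatment of $\Lie^{S_g\otimes TM}_Y\chi$. You claim it produces ``a derivation part proportional to $\eta$ and a rescaling part proportional to $\eta'(|x|)|x|$'', both pairing with $J$. In fact the rescaling part vanishes identically for this $Y$. The spinorial Lie derivative is $\Lie^S_Y\chi^\al=\na^s_Y\chi^\al-\tfrac14\gamma(\dd Y^\flat)\chi^\al$, but $Y^\flat=\eta(|x|)\,x_\be\,\dd x^\be$ is an exact $1$-form, so $\dd Y^\flat=0$; and the tangent correction $\Lie_Y e_\al+\tfrac12(\Lie_Y g)_\sharp e_\al=\tfrac12\bigl(\p_\be Y^\al-\p_\al Y^\be\bigr)e_\be$ vanishes because $\na Y$ is symmetric for a gradient field. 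Hence $\Lie^{S_g\otimes TM}_Y\chi=\na^s_Y\chi^\al\otimes e_\al=\eta(|x|)\,|x|\,\na^s_{\p_r}\chi^\al\otimes e_\al$, which carries only an $\eta$-weight, and its pairing with $J$ yields only the bulk integral in the third line of $F(r)$, with no surface piece. If you carried a nonzero $\eta'$-weighted term through the limit $\eta\to\mathds{1}_{B_r}$, you would generate a spurious integral over $\p B_r$ involving $J$ that does not appear in the stated formula. The symmetry of $\na Y$, which you do note for $\Lie_Y g$, must also be invoked in the spinor--tangent Lie derivative; noticing that it kills the rescaling contribution is precisely what keeps the $J$-term out of the boundary.
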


 \begin{proof}
  For simplicity we assume $U\subset \R^m$ and $x_0=0\in U$.
  For $\epsilon>0$ and $0<r<\dist(0,\p U)$, let $\eta_\epsilon(x)=\eta_\epsilon(|x|)\in C^\infty_0(B_r)$ be a radial cutoff function such that $0\le \eta_\epsilon\le 1$ and $\eta_\epsilon\equiv 1$ on~$B_{r(1-\epsilon)}$.
  Consider the radial vector field
  \begin{equation}
   Y(x)=\eta_\epsilon(|x|)\vec{x}=\eta_\epsilon(|x|)x^\al e_\al.
  \end{equation}
  on $U$, with
  \begin{equation}
   \na_{e_\al}Y^\be= \frac{\p Y^\be}{\p x^\al}
   =\eta_\epsilon(|x|)\delta_{\al\be}+\eta'_{\epsilon}(|x|)\frac{x^\al x^\be}{|x|}.
  \end{equation}
  Note that it is symmetric in $\al$ and $\be$. One can calculate the energy-momentum tensor
  $$T=T_{\al\be}e^\al\otimes e^\be$$
  where
  \begin{equation}\label{eq:super action:energy-momentum}
   \begin{split}
	T_{\al\be}
	&=2\langle \phi_*e_\al,\phi_* e_\be\rangle_{\phi^*h}
	   +\frac{1}{2}\left\langle\psi,\gamma(e_\al)\widetilde{\na}_{e_\be}\psi+\gamma(e_\be)\widetilde{\na}_{e_\al}\psi\right\rangle_{g_s\otimes\phi^*h} \\
	&\quad +\frac{2}{m}\langle\gamma(e_\eta)\gamma(e_\al)\chi^\eta\otimes \phi_*e_\be+\gamma(e_\eta)\gamma(e_\be)\chi^\eta\otimes\phi_*e_\al, \psi\rangle_{g_s\otimes\phi^*h}\\
	&\quad - \left(|\dd\phi|^2_{g^\vee\otimes \phi^*h} + \langle\psi,\D_g\psi\rangle - 4\langle(\mathds{1}\otimes\phi_*)Q\chi, \psi\rangle - |Q\chi|^2 |\psi|^2 -\frac{1}{6}R(\psi)\right)g_{\al\be},
   \end{split}
  \end{equation}
  and the supercurrent
  $$J=J^\al\otimes e_\al$$
  with
  \begin{equation}\label{eq:supercurrent}
   J^\al=\frac{2}{m}\left(2\langle\phi_*e_\be,\gamma(e_\be)\gamma(e_\al)\psi\rangle_{\phi^*h}+|\psi|^2\gamma(e_\be)\gamma(e_\al)\chi^\be\right)
  \end{equation}
  in general dimensions in a manner similar to the two dimensional case considered in \cite{jktwz2017symmetries}.
  Also in this local Euclidean chart, $g_{\al\be}=\delta_{\al\be}$ and
  \begin{equation}
   (\Lie_Y g)_{\al\be}= Y^\mu \frac{\p g_{\al\be}}{\p x^\mu}+ \frac{\p Y^\mu}{\p x^\al} g_{\mu\be}
             +\frac{\p Y^\mu}{\p x^\be} g_{\al\mu}
           =\frac{\p Y^\al}{\p x^\be}+\frac{\p Y^\be}{\p x^\al},
  \end{equation}
 while
  \begin{equation}
   \begin{split}
    \Lie^{S_g\otimes TM}_Y\chi
    =&(\Lie^S_Y \chi^\al)\otimes e_\al+ \chi^\al\otimes (\Lie^{TM}_Y e_\al) \\
    =&\left(\na^s_Y \chi^\al-\frac{1}{4}\gamma(\underbrace{\dd Y^\flat}_{=0})\chi^\al\right)\otimes e_\al
       +\chi^\al\otimes\left(\Lie_Y e_\al+\frac{1}{2}(\Lie_Y g)_\sharp e_\al\right)\\
    =&\na^s_Y \chi^\al\otimes e_\al+\chi^\al\otimes\underbrace{\left(-\na_{e_\al}Y+\frac{\p Y^\be}{\p x^\al}e_\be\right)}_{=0} \\
    =& \na^s_Y \chi^\al\otimes e_\al.
   \end{split}
  \end{equation}
  Next we use the vector field $Y$ in the condition \eqref{eq:stationary condition}. As $T$ is symmetric, and noting that $Y$ is supported in $B_r(0)\subset U$, we have
  \begin{equation}
   \begin{split}
    0=&\int_M -T_{\al\be}\frac{\p Y^\al}{\p x^\be}+\langle\na^s_Y\chi^\al, J^\al\rangle\dv \\
    =& \int_M -\eta_\epsilon(|x|)\tr_g(T)
               -\eta'_{\epsilon}(|x|)|x|\left(\frac{x^\al}{|x|}\frac{x^\be}{|x|}T_{\al\be}\right)
               +\langle\na^s_Y\chi^\al, J^\al\rangle\dv.
   \end{split}
  \end{equation}
 Recall that along the solution of \eqref{eq:EL} it holds that
 \begin{equation}
  \langle\psi,\D\psi\rangle=|Q\chi|^2|\psi|^2+\frac{1}{3}R(\psi)+2\langle(\mathds{1}\otimes\phi_*)Q\chi,\psi\rangle.
 \end{equation}
 Then the trace of the energy-momentum tensor is
 \begin{equation}
  \tr_g(T)=(2-m)|\dd\phi|^2+(m-1)\langle\psi,\D\psi\rangle+(2-m)|Q\chi|^2|\psi|^2+\frac{4-3m}{6}R(\psi).
 \end{equation}
 On the other hand,
 \begin{equation}
  \begin{split}
   \frac{x^\al}{|x|}\frac{x^\be}{|x|}T_{\al\be}
   =&2\left|\frac{\p\phi}{\p r}\right|^2+\langle\psi,\gamma(\p_r)\widetilde{\na}_{\p_r}\psi\rangle
       +\frac{4}{m}\langle\gamma(e_\al)\gamma(\p_r)\chi^\al\otimes\phi_*(\p_r),\psi\rangle \\
     &-\left(|\dd\phi|^2+\langle\psi,D\psi\rangle-4\langle(\mathds{1}\otimes\phi_*)Q\chi,\psi\rangle
             -|Q\chi|^2|\psi|^2-\frac{1}{6}R(\psi)\right)\\
  =&2\left|\frac{\p\phi}{\p r}\right|^2+\langle\psi,\gamma(\p_r)\widetilde{\na}_{\p_r}\psi\rangle
       +\frac{4}{m}\langle\gamma(e_\al)\gamma(\p_r)\chi^\al\otimes\phi_*(\p_r),\psi\rangle \\
     &-\left(|\dd\phi|^2-\langle\psi,\D\psi\rangle+|Q\chi|^2|\psi|^2+\frac{1}{2}R(\psi)\right)\\
  \end{split}
 \end{equation}
 where $\p_r\equiv\frac{\p}{\p r}$ and $r(x)=|x|=|x-x_0|$ denotes the radial function. Since $Y(x)=\eta_\epsilon(|x|)|x|\p_r$, we have
 \begin{multline}
   \langle\na^s_Y \chi^\al,J^\al\rangle
   =\eta_\epsilon(|x|)|x|\langle\na^s_{\p_r}\chi^\al,J^\al\rangle \\
   =\frac{2}{m}\eta_\epsilon(|x|)|x|
     \left(2\langle\na^s_{\p_r}\chi^\al\otimes \phi_*e_\be,\gamma(e_\be)\gamma(e_\al)\psi\rangle
               +|\psi|^2\langle\na^s_{\p_r}\chi^\al,\gamma(e_\be)\gamma(e_\al)\chi^\be\rangle\right).
 \end{multline}
 Substituting these into the stationary condition \eqref{eq:stationary condition}, we get
 \begin{equation}
  \begin{split}
   \int_M &\eta_\epsilon(x)(2-m)|\dd\phi|^2\dv  \\
   &+\int_M \eta_\epsilon(x)\left((m-1)\langle\psi,\D\psi\rangle
            +(2-m)|Q\chi|^2|\psi|^2+\frac{4-3m}{6}R(\psi)\right)\dv  \\
   & +\int_M \eta'_\epsilon(x)|x|\left(2\left|\frac{\p\phi}{\p r}\right|^2
                        +\langle\psi,\gamma(\p_r)\widetilde{\na}_{\p_r}\psi\rangle
                      +\frac{4}{m}\langle\gamma(e_\al)\gamma(\p_r)\chi^\al\otimes\phi_*\p_r,\psi\rangle\right)\dv\\
   &-\int_M \eta'_\epsilon(x)|x|\left(|\dd\phi|^2
                       -\langle\psi,\D\psi\rangle+|Q\chi|^2|\psi|^2+\frac{1}{2}R(\psi)\right)\dv \\
   =&\int_M \frac{2}{m}\eta_\epsilon(x)|x|\left(2\langle\na^s_{\p_r}\chi^\al\otimes\phi_*e_\be,\gamma(e_\be)\gamma(e_\al)\psi\rangle+|\psi|^2\langle\na^s_{\p_r}\chi^\al,\gamma(e_\be)\gamma(e_\al)\chi^\be\rangle\right)\dv.
  \end{split}
 \end{equation}
 Letting $\epsilon\to 0$, this formula reduces to
 \begin{align}
   (2-&m)\int_{B_r} |\dd\phi|^2 \dd x+ r\int_{\p B_r} |\dd\phi|^2 \dd s \allowdisplaybreaks[4]\\
   =& r\int_{\p B_r} 2\left|\frac{\p\phi}{\p r}\right|^2+\langle\psi,\gamma(\p_r)\widetilde{\na}_{\p_r}\psi\rangle
                     +\frac{4}{m}\langle\gamma(e_\al)\gamma(\p_r)\chi^\al\otimes\phi_*(\p_r),\psi\rangle\dd s\allowdisplaybreaks[4]\\
    &+r\int_{\p B_r}\left(\langle\psi,\D\psi\rangle-|Q\chi|^2|\psi|^2-\frac{1}{2}R(\psi)\right)\dd s\allowdisplaybreaks[4] \\
   &-\int_{B_r}\left((m-1)\langle\psi,\D\psi\rangle
                             +(2-m)|Q\chi|^2|\psi|^2+\frac{4-3m}{6}R(\psi)\right)\dd x \allowdisplaybreaks[4]\\
   &+\frac{2}{m}r\int_{B_r} 2\langle\na^s_{\p_r}\chi^\al\otimes\phi_*e_\be,
                                            \gamma(e_\be)\gamma(e_\al)\psi\rangle
             +|\psi|^2\langle\na^s_{\p_r}\chi^\al,\gamma(e_\be)\gamma(e_\al)\chi^\be\rangle \dd x.
 \end{align}
 Note that
 \begin{equation}
  \begin{split}
   \frac{\dd}{\dd r}\left(\frac{1}{r^{m-2}}\int_{B_r}|\dd\phi|^2\dd x\right)
  =&-(m-2)\frac{1}{r^{m-1}} \int_{B_r}|\dd\phi|^2\dd x +\frac{1}{r^{m-2}}\int_{\p B_r}|\dd\phi|^2\dd s \\
  =&\frac{1}{r^{m-1}}\left((2-m)\int_{B_r}|\dd\phi|^2\dd x+ r\int_{\p B_r}|\dd\phi|^2\dd s\right).
  \end{split}
 \end{equation}
 Thus, integrating $r$ from $R_1$ to $R_2$ yields the conclusion.
 \end{proof}

Consequently, we have the following type of monotonicity inequality, which is crucial in the study of partial regularity for harmonic map type problems in higher dimensions.

\begin{prop}\label{prop:almost monotone inequality}
 Assume that $(\phi,\psi)\in W^{1,2}(U,N)\times S^{1,\frac{4}{3}}(U, \R^L\otimes R^K)$ is a stationary solution of \eqref{eq:EL}, and the gravitino is critical, i.e., the supercurrent $J\equiv 0$. If in addition,~$\na\psi\in L^p(U)$ for some $\frac{2m}{3}<p< m$, then there exists $C_0=C_0(p, \|\na\psi\|_{L^p(U)}, \|\chi\|_{L^\infty(U)})>0$ such that for any $x_0\in U$ and $0<R_1<R_2<\min\{\dist(x_0,\p U),1\}$, it holds that
 \begin{equation}\label{eq:almost monotone inequality}
  \frac{1}{R_1^{m-2}}\int_{B_{R_1}}|\dd\phi|^2\dd x
  \le \frac{1}{R_2^{m-2}}\int_{B_{R_2}}|\dd\phi|^2\dd x
      +C_0 R_2^{3-\frac{2m}{p}}.
 \end{equation}

\end{prop}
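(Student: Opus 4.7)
The plan is to apply Proposition~\ref{monotone} directly and bound $\int_{R_1}^{R_2}F(r)\,\dd r$ from below by $-C_0 R_2^{3-\frac{2m}{p}}$; combined with the monotonicity identity this is exactly \eqref{eq:almost monotone inequality}. Criticality of the gravitino means $J\equiv 0$, so the last double integral in $F(r)$ vanishes outright. On the boundary, $\frac{2}{r^{m-2}}\int_{\p B_r}|\p\phi/\p r|^2\,\dd s\ge 0$ is kept as the ``good'' piece, while all remaining boundary and volume terms are majorized algebraically by one of three elementary products, $|\psi||\widetilde{\na}\psi|$, $|\chi|^2|\psi|^2$, or $|\psi|^4$ (using $|\D\psi|\le \sqrt{m}\,|\widetilde{\na}\psi|$ and $|R(\psi)|\le C|\psi|^4$). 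The one exception is the boundary cross term $\frac{4}{m}\langle\gamma(e_\al)\gamma(\p_r)\chi^\al\otimes\phi_*\p_r,\psi\rangle$, which carries $|\p_r\phi|$: Young's inequality $|\p_r\phi||\chi||\psi|\le \delta|\p_r\phi|^2+C_\delta|\chi|^2|\psi|^2$ with $\delta<2$ absorbs the first part into the good term and folds the rest into the $|\chi|^2|\psi|^2$ pool.

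Next I would use Fubini to collapse boundary and volume contributions into a single family of weighted integrals over $B_{R_2}$:
\begin{align}
 \int_{R_1}^{R_2}\frac{1}{r^{m-2}}\int_{\p B_r}h(x)\,\dd s\,\dd r&=\int_{B_{R_2}\setminus B_{R_1}}|x|^{2-m}h(x)\,\dd x,\\
 \int_{R_1}^{R_2}\frac{1}{r^{m-1}}\int_{B_r}h(x)\,\dd x\,\dd r&\le \frac{1}{m-2}\int_{B_{R_2}}|x|^{2-m}h(x)\,\dd x,
\end{align}
so that every error term becomes an integral of the form $I(h)\coloneqq \int_{B_{R_2}}|x|^{2-m}h(x)\,\dd x$ with $h\in\{|\psi||\widetilde{\na}\psi|,\,|\chi|^2|\psi|^2,\,|\psi|^4\}$.

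For the weighted H\"older estimates, Kato's inequality $|\na|\psi||\le |\widetilde{\na}\psi|$ combined with the $L^4$-integrability built into $S^{1,\frac{4}{3}}$ and Sobolev embedding gives $\psi\in L^{p^*}_{\text{loc}}$ with $p^*=\frac{mp}{m-p}$ and norm controlled by $\|\widetilde{\na}\psi\|_{L^p(U)}$ and $\|\psi\|_{L^4(U)}$. Applying H\"older with exponents $(p,p^*,s)$ to $I(|\psi||\widetilde{\na}\psi|)$ forces $s^{-1}=1-\frac{2}{p}+\frac{1}{m}$; the weight $|x|^{2-m}$ lies in $L^s(B_{R_2})$ precisely when $(m-2)s<m$, equivalently $p>\frac{2m}{3}$, and a direct radial integration gives $I(|\psi||\widetilde{\na}\psi|)\le C R_2^{3-\frac{2m}{p}}$. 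Analogous H\"older pairings $(p^*/2,s)$ and $(p^*/4,s)$ on the quadratic and quartic terms yield $R_2^{4-\frac{2m}{p}}$ and $R_2^{6-\frac{4m}{p}}$, both of which exceed $3-\frac{2m}{p}$ under $p>\frac{2m}{3}$ and so, since $R_2\le 1$, collapse to $CR_2^{3-\frac{2m}{p}}$. Summing the three contributions produces the claimed inequality.

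The principal obstacle is the $|\p_r\phi|$ factor in the boundary cross term: since $\na\phi$ is only known to be in $L^2$ (a Morrey bound on $\na\phi$ is in fact the very conclusion one is trying to establish), H\"older against the singular weight $|x|^{2-m}$ does not close, and one is forced to exploit the sign of the Pohozaev-type term $|\p_r\phi|^2$ supplied by $F(r)$ to absorb it. The threshold $p>\frac{2m}{3}$ is sharp, as the three error exponents $3-\frac{2m}{p},\ 4-\frac{2m}{p},\ 6-\frac{4m}{p}$ simultaneously reach zero at $p=\frac{2m}{3}$; and criticality of the gravitino is used exclusively to discard the $\na^s\chi$-volume term in $F(r)$, which would otherwise introduce an uncontrollable $|\na^s\chi||\na\phi||\psi|$ contribution that cannot be absorbed by the single sign-definite Pohozaev term.
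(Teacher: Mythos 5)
Your proof is correct and essentially follows the paper's argument: start from Proposition~\ref{monotone}, drop the supercurrent term by criticality, absorb the boundary cross term into the Pohozaev piece $-2|\p_r\phi|^2$ via Young's inequality, and control the remaining terms by H\"older using $\psi\in L^{mp/(m-p)}_{\mathrm{loc}}$ (from Sobolev embedding of $\na\psi\in L^p$) and $\na\psi\in L^p$. The one cosmetic difference is in the $r$-integration: the paper recognizes the shell integrals $\int_{\p B_r}$ as $r$-derivatives of ball integrals and integrates by parts, whereas you collapse everything via Fubini into weighted bulk integrals against the singular weight $|x|^{2-m}$ (which lies in $L^s(B_{R_2})$ exactly when $p>\tfrac{2m}{3}$); both yield the identical exponents $3-\tfrac{2m}{p}$, $4-\tfrac{2m}{p}$, $6-\tfrac{4m}{p}$ and the same sharp threshold.
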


 \begin{proof}
  From the previous proposition  we have
 \begin{equation}
  \frac{1}{R_1^{m-2}}\int_{B_{R_1}}|\dd\phi|^2\dd x
  =\frac{1}{R_2^{m-2}}\int_{B_{R_2}}|\dd\phi|^2\dd x-\int_{R_1}^{R_2}F(r)\dd r.
 \end{equation}
 Hence it suffices to estimate the last integral, which can be split into three parts
 \begin{equation}
  -\int_{R_1}^{R_2} F(r)\dd r= \mathrm{I}+\mathrm{II}+\mathrm{III},
 \end{equation}
 where
 \begin{align}
   \mathrm{I}\equiv \int_{R_1}^{R_2}&\frac{\dd r}{r^{m-2}}
           \int_{\p B_r} -2\left|\frac{\p\phi}{\p r}\right|^2-\langle\psi,\gamma(\p_r)\widetilde{\na}_{\p_r}\psi\rangle
            -\frac{4}{m}\langle\gamma(e_\al)\gamma(\p_r)\chi^\al\otimes\phi_*(\p_r),\psi\rangle \\
   &\qquad\qquad\qquad\qquad\qquad\qquad\qquad
              -\langle\psi,\D\psi\rangle+|Q\chi|^2|\psi|^2+\frac{1}{2}R(\psi)\dd s ,\allowdisplaybreaks[3]\\
  \mathrm{II}\equiv \int_{R_1}^{R_2}&\frac{\dd r}{r^{m-1}}
             \int_{B_r} \left((m-1)\langle\psi,\D\psi\rangle
                             +(2-m)|Q\chi|^2|\psi|^2+\frac{4-3m}{6}R(\psi)\right)\dd x, \\
  \mathrm{III}\equiv \int_{R_1}^{R_2}&\frac{2}{m}\frac{\dd r}{r^{m-2}}
             \int_{B_r} -\langle\na^s_{\p_r}\chi^\al, J^\al\rangle \dd x.
 \end{align}
 Note that $\psi\in L^4(U)$ and $\na\psi\in L^p(U)$ for some $p\in (\frac{2m}{3},m)$, hence by Sobolev embeddings we have $\psi\in L^{\frac{mp}{m-p}}(U)$, with some $\frac{mp}{m-p}>2m\ge 6$. We also assume that the gravitino field is smoothly bounded. We estimate the other terms as follows:
 \begin{enumerate}
  \item Note that
        \begin{equation}
         -\frac{4}{m}\langle\gamma(e_\al)\gamma(\p_r)\chi^\al\otimes\phi_*(\p_r),\psi\rangle
         \le \left|\frac{\p\phi}{\p r}\right|^2+ C|\chi|^2|\psi|^2
        \end{equation}
        and hence we see that for any $r\in [R_1,R_2]$,
        \begin{equation}
         \begin{split}
          \int_{B_r} -2\left|\frac{\p\phi}{\p r}\right|^2
           &      -\frac{4}{m}\langle\gamma(e_\al)\gamma(\p_r)\chi^\al\otimes\phi_*(\p_r),\psi\rangle
                   +|Q\chi|^2|\psi|^2\dd x \\
           & \le \int_{B_r} -\left|\frac{\p\phi}{\p r}\right|^2 + C|\chi|^2|\psi|^2\dd x
             \le \int_{B_r} C|\chi|^2|\psi|^2\dd x \\
           & \le C\|\chi\|^2_{L^\infty}\|\psi\|^2_{L^{\frac{mp}{m-p}}(B_r)}r^{m(1-\frac{2}{p}+\frac{2}{m})} \\
           & \le C\|\chi\|^2_{L^\infty}\|\psi\|^2_{L^{\frac{mp}{m-p}}(U)}r^{m+2-\frac{2m}{p}}
         \end{split}
        \end{equation}
        which is always bounded by $C\|\chi\|^2_{L^\infty}\|\psi\|^2_{L^{\frac{mp}{m-p}}(U)}R_2^{4-\frac{2m}{p}}r^{m-2}$.
        Using integration by parts we get
        \begin{align}
          &\int_{R_1}^{R_2} \frac{\dd r}{r^{m-2}}
                \int_{\p B_r} -2\left|\frac{\p\phi}{\p r}\right|^2
                  -\frac{4}{m}\langle\gamma(e_\al)\gamma(\p_r)\chi^\al\otimes\phi_*(\p_r),\psi\rangle
                     +|Q\chi|^2|\psi|^2\dd s \\
          =&\int_{R_1}^{R_2}\frac{1}{r^{m-2}}
             \dd\left(\int_{B_r} -2\left|\frac{\p\phi}{\p r}\right|^2
                -\frac{4}{m}\langle\gamma(e_\al)\gamma(\p_r)\chi^\al\otimes\phi_*(\p_r),\psi\rangle
                   +|Q\chi|^2|\psi|^2\dd x\right) \allowdisplaybreaks[4] \\
          =&\frac{1}{R_2^{m-2}}\int_{B_{R_2}}-2\left|\frac{\p\phi}{\p r}\right|^2
                    -\frac{4}{m}\langle\gamma(e_\al)\gamma(\p_r)\chi^\al\otimes\phi_*(\p_r),\psi\rangle
                     +|Q\chi|^2|\psi|^2\dd x \\
           &-\frac{1}{R_1^{m-2}}\int_{B_{R_1}} -2\left|\frac{\p\phi}{\p r}\right|^2
                    -\frac{4}{m}\langle\gamma(e_\al)\gamma(\p_r)\chi^\al\otimes\phi_*(\p_r),\psi\rangle
                     +|Q\chi|^2|\psi|^2\dd x \\
           &+\int_{R_1}^{R_2}\frac{m-2}{r^{m-1}}\int_{B_r} -2\left|\frac{\p\phi}{\p r}\right|^2
                    -\frac{4}{m}\langle\gamma(e_\al)\gamma(\p_r)\chi^\al\otimes\phi_*(\p_r),\psi\rangle
                     +|Q\chi|^2|\psi|^2\dd x  \allowdisplaybreaks[4]\\
          \le& C\|\chi\|^2_{L^\infty}\|\psi\|^2_{L^{\frac{mp}{m-p}}(U)} R_2^{4-\frac{2m}{p}}
               +C\|\chi\|^2_{L^\infty}\|\psi\|^2_{L^{\frac{mp}{m-p}}(U)}
                       \int_{R_1}^{R_2} r^{1-m} r^{m+2-\frac{2m}{p}} \dd r \displaybreak[2]\\
          \le&C\|\chi\|^2_{L^\infty} \|\psi\|^2_{L^{\frac{mp}{m-p}}(U)} R_2^{4-\frac{2m}{p}}.
         \end{align}
  \item For the terms involving derivatives of $\psi$, since we assumed that they have higher integrability,  it holds that
        \begin{equation}
         \begin{split}
          \int_{B_r} -\langle\psi,\gamma(\p_r)\widetilde{\na}_{\p_r}\psi\rangle-\langle\psi,\D\psi\rangle\dd x
          \le C\|\psi\|_{L^{\frac{mp}{m-p}}(B_r)}\|\widetilde{\na}\psi\|_{L^p(B_r)}
              r^{m(1-\frac{2}{p}+\frac{1}{m})}.
         \end{split}
        \end{equation}
        Again using integration by parts we obtain
        \begin{equation}
         \begin{split}
          &\int_{R_1}^{R_2} \frac{\dd r}{r^{m-2}} \int_{\p B_r}
                -\langle\psi,\gamma(\p_r)\widetilde{\na}_{\p}\psi\rangle-\langle\psi,\D\psi\rangle\dd s \allowdisplaybreaks[4]\\
          &=\int_{R_1}^{R_2}\frac{1}{r^{m-2}}
            \dd\left(\int_{B_r}-\langle\psi,\gamma(\p_r)\widetilde{\na}_{\p}\psi\rangle-\langle\psi,\D\psi\rangle \right) \allowdisplaybreaks[4] \\
          &=\frac{1}{R_2^{m-2}} \int_{B_{R_2}}-\langle\psi,\gamma(\p_r)\widetilde{\na}_{\p_r}\psi\rangle
                                                        -\langle\psi,\D\psi\rangle \dd x \allowdisplaybreaks[4]\\
          &\qquad  -\frac{1}{R_1^{m-2}} \int_{B_{R_1}}-\langle\psi,\gamma(\p_r)\widetilde{\na}_{\p_r}\psi\rangle
                                                        -\langle\psi,\D\psi\rangle \dd x \\
          &\qquad +\int_{R_1}^{R_2}\frac{m-2}{r^{m-1}}\int_{B_r}
                                                -\langle\psi,\gamma(\p_r)\widetilde{\na}_{\p_r}\psi\rangle
                                                        -\langle\psi,\D\psi\rangle \dd x \allowdisplaybreaks[3]\\
          &\le C\|\psi\|_{L^{\frac{mp}{m-p}}(U)}\|\widetilde{\na}\psi\|_{L^p(U)}R_2^{3-\frac{2m}{p}}
               +C\|\psi\|_{L^{\frac{mp}{m-p}}(U)}\|\widetilde{\na}\psi\|_{L^p(U)}\int_{R_1}^{R_2} r^{2-\frac{2m}{p}} \dd r \\
         &\le C\left(1+\frac{1}{3-\frac{2m}{p}}\right)\|\psi\|_{L^{\frac{mp}{m-p}}(U)}\|\widetilde{\na}\psi\|_{L^p(U)}R_2^{3-\frac{2m}{p}}.
         \end{split}
        \end{equation}
  \item Next we deal with the curvature term, which is quartic in $\psi$. Note that
        \begin{equation}
         \begin{split}
          \int_{B_r}R(\psi)\dd x\le C(N) \int_{B_r}|\psi|^4\dd x\le C(N) \|\psi\|^4_{L^{\frac{mp}{m-p}}(U)} r^{m(1-\frac{4}{p}+\frac{4}{m})}.
         \end{split}
        \end{equation}
        Hence
        \begin{equation}
         \begin{split}
          &\int_{R_1}^{R_2}\frac{\dd r}{r^{m-2}} \int_{\p B_r} \frac{1}{2}R(\psi)\dd s
          =\int_{R_1}^{R_2}\frac{1}{r^{m-2}}\dd\left(\int_{B_r}\frac{1}{2}R(\psi)\dd x\right) \\
          &=\frac{1}{R_2^{m-2}}\int_{B_{R_2}}\frac{1}{2}R(\psi)\dd x
             -\frac{1}{R_1^{m-2}}\int_{B_{R_1}}\frac{1}{2}R(\psi)\dd x
              +\int_{R_1}^{R_2}\frac{m-2}{r^{m-1}}\int_{B_r}R(\psi)\dd x \\
          &\le C(N)\|\psi\|^4_{L^{\frac{mp}{m-p}}(U)}R_2^{2(3-\frac{2m}{p})}
                +C(N)\int_{R_1}^{R_2} r^{5-\frac{4m}{p}}\dd r \\
          &\le C(N)\left(1+\frac{1}{3-\frac{2m}{p}}\right)
                 \|\psi\|^4_{L^{\frac{mp}{m-p}}(U)}R_2^{2(3-\frac{2m}{p})}.
         \end{split}
        \end{equation}
        For convenience we assumed $R_2<1$ so that the last line above can be bounded by
        \begin{equation}
          C(N)\left(1+\frac{1}{3-\frac{2m}{p}}\right)
                 \|\psi\|^4_{L^{\frac{mp}{m-p}}(U)}R_2^{3-\frac{2m}{p}}.
        \end{equation}

  \item For the summands in $\mathrm{II}$, they can be estimated in a similar, actually easier way as above.
  \item Since  we assumd the gravitino field $\chi$ is critical, i.e. the corresponding supercurrent~$J$ vanishes, we have $\mathrm{III}=0$.

 \end{enumerate}

  Therefore, we have shown that
  \begin{equation}
   -\int_{R_1}^{R_2} F(r)\dd r\le C_0 R_2^{3-\frac{2m}{p}},
  \end{equation}
  where $C_0>0$ depends on $\|\widetilde{\na}\psi\|_{L^p(U)}$, $\|\chi\|_{L^\infty(U)}$ and the value of $p$. Moreover, $C_0\to \infty$ as~$p\searrow\frac{2m}{3}$.
 \end{proof}

\begin{proof}[Proof of Theorem \ref{thm:partial regularity for stationary solutions-critical gravitino}]
 As we have already established Proposition \ref{prop:almost monotone inequality}, the rest of the proof is the same as in the case of Dirac-harmonic maps considered in \cite[Proof of Theorem 1.8]{wang2009regularity}.
\end{proof}

In the proof of Proposition \ref{prop:almost monotone inequality},  we make the assumption that the gravitino field is critical because the term $\mathrm{III}$ would violate the inequality \eqref{eq:almost monotone inequality} in high dimensions. However, in  low dimensions, a slightly different inequality still holds.

\begin{proof}[Proof of Theorem \ref{thm:partial regularity for stationary solutions-low dimension}]
 Consider $\mathrm{III}$ which is given by
\begin{equation}
\int_{R_1}^{R_2}\frac{2}{m}\frac{\dd r}{r^{m-2}} \int_{B_r}
     -2\langle\na^s_{\p_r} \chi^\al\otimes\phi_* e_\be, \gamma(e_\be)\gamma(e_\al)\psi\rangle
     +|\psi|^2\langle\na^s_{\p_r}\chi^\al,\gamma(e_\be)\gamma(e_\al)\chi^\be\rangle \dd x.
\end{equation}
The second summand of the integrand is pretty good, because it can be bounded by
$$C|\na\chi||\chi||\psi|^2,$$
 and it can be estimated  exactly as before. The first summand causes more trouble: since $\na\phi\in L^2$,
\begin{equation}
 \begin{split}
  \int_{B_r} -2\langle\na^s_{\p_r} \chi^\al\otimes\phi_* e_\be, \gamma(e_\be)\gamma(e_\al)\psi\rangle
  &\le C\|\na\chi\|_{L^\infty} \|\na\phi\|_{L^2(U)}\|\psi\|_{L^{\frac{mp}{m-p}}(U)}r^{m(\frac{1}{2}-\frac{1}{p}+\frac{1}{m})}
 \end{split}
\end{equation}
and then
\begin{equation}
 \begin{split}
  \int_{R_1}^{R_2}\frac{2}{m}\frac{\dd r}{r^{m-2}} \int_{B_r}
    & -2\langle\na^s_{\p_r} \chi^\al\otimes\phi_* e_\be, \gamma(e_\be)\gamma(e_\al)\psi\rangle\dd x \\
   \le& C\|\na\chi\|_{L^\infty} \|\na\phi\|_{L^2(U)}\|\psi\|_{L^{\frac{mp}{m-p}}(U)}
         \int_{R_1}^{R_2}r^{3-\frac{m}{2}-\frac{m}{p}}\dd r.
 \end{split}
\end{equation}
We need to ensure that
\begin{equation}
 3-\frac{m}{2}-\frac{m}{p}>-1
\end{equation}
for $p\in (\frac{2m}{3},m)$. If $m\le 5$, this still gives a positive power of $R_2$, which means that we can  still obtain a monotonicity inequality of the form \eqref{eq:almost monotone inequality}, and hence the same conclusion as in Theorem \ref{thm:partial regularity for stationary solutions-critical gravitino} holds. This finishes the proof.
\end{proof}

\end{document}